\newcommand{\txt}[1]{\text{\rmfamily\mdseries\upshape{#1}}}
\newcommand{\cost}{\rho}
\newcommand{\safe}{\xi}
\newcommand{\growth}{\theta}
\renewcommand{\a}{\mathrm{a}}
\newcommand{\amove}{\vek{a}}
\renewcommand{\d}{\mathrm{d}}
\newcommand{\dmove}{\vek{d}}
\newcommand{\dest}{\mathrm{dest}}
\newcommand{\B}{B}
\newcommand{\Configs}{\mathrm{Configs}}
\newcommand{\D}{D}
\newcommand{\gc}{\mathrm{gc}}
\newcommand{\fail}{\mathrm{fail}}
\newcommand{\halt}{\mathrm{Halt}}
\newcommand{\Histories}{\mathrm{Histories}}
\newcommand{\J}{J}
\newcommand{\M}{M}
\renewcommand{\P}{P}
\newcommand{\cP}{\mathcal{P}}
\newcommand{\Lmoves}{\mathrm{Locmoves}}
\newcommand{\Moves}{\Delta}
\newcommand{\Q}{Q}
\newcommand{\R}{R}
\newcommand{\record}{\alpha}
\newcommand{\Records}{\mathrm{Records}}
\newcommand{\Stacks}{\mathrm{Stacks}} 
\renewcommand{\S}{S}
\newcommand{\U}{U}
\newcommand{\V}{V}
\newcommand{\swell}{\sigma}
\renewcommand{\k}{k}
\renewcommand{\r}{r}
\newcommand{\p}{\vek{p}}
\newcommand{\s}{s}
\newcommand{\success}{\mathrm{succ}}
\newcommand{\start}{\mathrm{start}}
\renewcommand{\t}{t}
\newcommand{\m}{m}
\newcommand{\n}{n}
\renewcommand{\u}{\vek{u}}
\newcommand{\w}{\vek{w}}
\newcommand{\z}{z}
\newcommand{\vek}[1]{\boldsymbol{#1}}
 \theoremstyle{plain}
 \newtheorem{theorem}{Theorem}
 \newtheorem{lemma}{Lemma}[section]
 \newtheorem{Condition}[lemma]{Condition}
\newcommand{\customqed}[1]{{\renewcommand{\qedsymbol}{#1}\qed}}
\newcommand{\varqed}{\customqed{\hbox{$\lrcorner$}}}
 \newcommand\flo[1]{{\lfloor #1\rfloor}}
 \newcommand\ul{\underline}
  \renewcommand{\le}{\leqslant}
  \renewcommand{\ge}{\geqslant}
 \newcommand{\df}[1]{{\rmfamily\itshape\mdseries#1}}
 \theoremstyle{definition}
 \newtheorem{Definition}{Definition}[section]
 \newenvironment{definition}{%
   \begin{Definition}}{\varqed\end{Definition}}
 \theoremstyle{remark}
 \newtheorem*{Notation}{Notation}
 \newtheorem{Example}[lemma]{Example}
 \newtheorem{Examples}[lemma]{Examples}
 \newenvironment{example}{%
   \begin{Example}}{\varqed\end{Example}}
 \newtheorem{Remark}[lemma]{Remark}
 \newtheorem{Remarks}[lemma]{Remarks}
 \newenvironment{remark}{%
   \begin{Remark}}{\varqed\end{Remark}}
 \newenvironment{remarks}{%
   \begin{Remarks}}{\varqed\end{Remarks}}
 \newcommand{\cB}{\mathcal{B}}
 \newcommand{\cE}{\mathcal{E}}
 \newcommand{\cH}{\mathcal{H}}
 \newcommand{\cK}{\mathcal{K}}
 \newcommand{\cM}{\mathcal{M}}
 \newcommand{\cT}{\mathcal{T}}
 \newcommand{\cU}{\mathcal{U}}
 \newcommand{\bbG}{\mathbb{G}}
 \newcommand{\bbZ}{\mathbb{Z}}
\begin{document}

  \title{The angel wins}

  \author{Peter G\'acs}
  \address{Boston University}
  \email{gacs@bu.edu}

  \date{\today}

 \begin{abstract}
The angel-devil game is played on an infinite two-dimensional
``chessboard'' $\bbZ^{2}$.
The squares of the board are all white at the beginning.
The players called angel and devil take turns in their steps.
When it is the devil's turn, he can turn a square black.
The angel always stays on a white square, and 
when it is her turn she can fly at a distance of at most
$\J$ steps (each of which can be
horizontal or vertical) steps to a new white square.
Here $\J$ is a constant.
The devil wins if the angel does not find any more white squares to land
on.
The result of the paper is that if $\J$ is sufficiently large then the
angel has a strategy such that the devil will never capture her.
This deceptively easy-sounding result has been a
conjecture, surprisingly, for about thirty years.  
Several other independent solutions have appeared also in this summer:
see the Wikipedia.
Some of them prove the result for an angel that can make up to two
steps (including diagonal ones).

The solution opens the possibility to solve a number of related problems
and to introduce new, adversarial concepts of connectivity.
 \end{abstract}

\maketitle

\section{Introduction}

The angel-devil game is played on an infinite two-dimensional
``chessboard'' $\bbZ^{2}$.
The squares of the board are all white at the beginning.
The players called angel and devil take turns in their steps.
When it is the devil's turn, he can turn a square black.
The angel always stays on a white square, and 
when it is her turn she can fly at a distance of at most
$\J$ (horizontal or vertical) steps to a new white square.
Here $\J$ is a constant.
The devil wins if the angel does not find any more white squares to land
on.
The result of the paper is that if $\J$ is sufficiently large then the
angel has a strategy such that the devil will never capture her.
This solves a problem that to the authors' knowledge has been open for
about 30 years.

See the bibliography and also the Wikipedia item on the ``angel problem''.

\subsection{Weights}

Let us make the devil a little stronger.
Instead of jumping a distance $\J$ in one step, assume that
the angel makes at most one (vertical or horizontal) 
step at a time, but the devil can deposit only a
weight of size $\swell = 1/\J$ in one step.
The angel is not allowed to step on a square with weight $\ge 1$.
We do not restrict the devil in how he distributes this weight, it need not
be in fractions of size $1/\J$.
 \begin{definition}
Let $\mu(S) = \mu_{t}(S)$
be the weight (measure) of set $S$ at time $t$.
The devil's restriction is 
 \begin{align*}
  \mu_{t+1}(\bbZ^{2})\le\mu_{t}(\bbZ^{2})+\swell. 
 \end{align*}
Let $\cM$ be the set of all measures.
 \end{definition}
 
The main theorem of the paper is the following.
 \begin{theorem}\label{t.angel-wins}
For sufficiently small $\swell$, the angel has a strategy in which
she will never run out of places to land on.
 \end{theorem}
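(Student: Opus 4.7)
The plan is to build a recursive hierarchy of angel strategies, indexed by a scale $k = 0,1,2,\ldots$ with geometrically growing region sizes $R_{k}$ (say $R_{k+1}/R_{k} = \growth$ for some large growth factor). At level $0$ the angel simply takes single unit steps, avoiding weight-$\ge 1$ squares directly. At level $k\ge 1$ she conceives of the plane as tiled by level-$k$ squares of side $\sim R_{k}$, plans a coarse route through a sequence of adjacent level-$k$ tiles, and within each such tile recurses, calling the level-$(k-1)$ strategy to cross from one face to another. The hope is that if the devil focuses his weight at fine scales the coarse plan goes through unimpeded, while if he concentrates at coarse scales he pays a large price and can only spoil a sparse set of tiles.

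\textbf{Key definitions and inductive step.} At each level $k$ I would introduce a cost threshold $\cost_{k}$ and call a level-$k$ tile \emph{bad} if the total devil weight inside (a suitable neighborhood of) the tile has ever exceeded $\cost_{k}$; otherwise the tile is \emph{good}. The inductive claim is roughly: whenever a level-$k$ tile is good and the pattern of bad level-$(k-1)$ sub-tiles inside it is sufficiently sparse (density below some $\bad_{k-1}$), the angel can cross the tile from any prescribed entry face to any prescribed exit face in at most $\t_{k}$ unit steps while staying on white squares. The induction then has two halves. First, a geometric/combinatorial step: given a sparse configuration of blocked sub-tiles, design a detour path at scale $k-1$ that reaches the desired exit face; this is essentially a routing-through-percolation argument, and the growth parameter $\growth$ must be chosen large enough that sparse blockages can always be skirted. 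Second, a budget step: since making a level-$k$ tile bad costs the devil weight $\cost_{k}$ and he can only generate weight at rate $\swell$, the density of bad level-$k$ tiles along any angel path is $\le O(\swell\cdot \t_{k}/\cost_{k})$; choose $\cost_{k}$ and $\t_{k}$ so this density stays below $\bad_{k}$, closing the recursion.

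\textbf{Launching the strategy.} With all levels available, the angel's actual play is the limit: at each moment she follows the lowest-level substrategy that is currently active, which is dictated by some active level-$k$ plan which in turn is dictated by a level-$(k+1)$ plan, and so on. Since each level's bad-tile density is controlled and the growth is geometric, the total measure the devil would need to fence the angel in at every scale simultaneously is infinite, so at some finite level the angel always finds a good tile to escape into. Theorem~\ref{t.angel-wins} follows by taking $\swell$ small enough that the inequalities at every level hold simultaneously.

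\textbf{Main obstacle.} The hardest point, and the reason the problem has resisted for three decades, is \emph{causality of re-planning across scales}: the angel must commit to her high-level route long before the devil has revealed his plays at fine scales, yet she must be able to react to newly discovered bad sub-tiles without blowing her weight budget at any level. Formalizing a strategy in which a level-$k$ plan may be locally amended when a bad level-$(k-1)$ tile appears on its route, while keeping the amended plan consistent with the enclosing level-$(k+1)$ plan and without letting the amortized cost of these amendments inflate the effective density $\bad_{k}$, is the technical heart of the argument. Much of the work, I expect, will be devoted to setting up bookkeeping (records, histories, and ``safety'' predicates) that makes this multi-scale re-planning simultaneously causal, local, and budget-preserving.
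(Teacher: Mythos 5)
Your overall multi-scale recursion — planning at scale $k$ and implementing those moves by recursing at scale $k-1$, with the devil's rate-of-deposit bounded and some per-level budget accounting — is indeed the shape of the paper's argument. But there are two places where your sketch does not close, and they are precisely the places where the paper's machinery is essential rather than ``bookkeeping.''

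First, your budget step is a density argument: bad level-$k$ tiles have density $O(\swell\cdot \t_{k}/\cost_{k})$, and you route around a sparse obstacle set percolation-style. This does not work as stated, because the devil's weight is not spread around at random: he can spend \emph{all} of it exactly where the angel is headed, so ``density along the path'' cannot be controlled without something that makes the devil's spending there self-defeating. The paper's fix is not a sparsity estimate but a single conserved quantity, a \emph{time bound} on any history $\chi$,
\begin{align*}
\tau(\chi) = \cost_{1}\mu(\U) - \cost_{2}\n\B + \tau_{\gc}(\chi),
\end{align*}
where $\U$ is the union of move bodies, $\tau_{\gc}$ is the sum of the lengths of straight runs plus a fixed cost per turn, and $\n$ counts \emph{failed continuing attacks}. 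The inductive content of the Main Lemma is that this quantity is preserved under scale-up: the fine-scale implementation's $\tau$ is at most the coarse-scale move's $\tau$. Every digression the angel is forced to make is charged either to weight in a disjoint ``scapegoat run'' (forcing the devil to have spent mass off the angel's path — this is why the digression happened at all), or, when no such off-path mass is available, to the negative ``profit'' term from failed attacks. Your proposal contains neither the negative term nor the charging scheme, and it is exactly the negative term that lets the recursion close in the case where the devil has piled mass \emph{inside} the body of a failed crossing attempt.

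Second, the time bound above is only valid on a \emph{self-nonintersecting} (simple) path, and arranging the recursion so that the implemented path is simple is a nontrivial constraint that drives the design of the move set: one needs not just steps and jumps but turns, escapes, finishes, and both new and continuing attacks, precisely so that a failed attempt can be retreated from without the retreat overlapping with what came before. Your ``amend the plan when a bad sub-tile appears'' idea, if implemented naively, generates backtracking and reuse of cells, which destroys the time bound. You correctly identify ``causality of re-planning across scales'' as the heart of the matter, but the resolution is not to make re-planning budget-preserving in a density sense; it is to build in, at the level of move definitions, the exact retreat primitives whose cost can be charged to the devil's own mass. In short: the recursive scaffold is right, the accounting mechanism you propose for closing it would not go through, and the paper's conserved time-bound-with-profit is the missing idea.
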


\subsection{Informal idea}

 \begin{definition}
Let 
 \begin{align*}
   \Q > 1
 \end{align*}
 be an integer parameter.
For an integer $\k\ge 0$
we call a square a $\k$-\df{square}, or $\k$-\df{colony}
if the coordinates of its corners are multiples of $\Q^{k}$.
A $(\k+1)$-square can be broken up  into \df{rows}
each of which is the union of $\Q$ disjoint $\k$-squares.
It can also be broken up into \df{columns}.
The side length of a square $\U$ is denoted $|\U|$.
We will call a square $\U$ \df{bad} (for the current measure $\mu$), if 
 \begin{align*}
 \mu(\U)\ge |\U|.
 \end{align*}
Otherwise it is called \df{good}.
 \end{definition}

The angel needs a strategy that works on all scales, to make sure she is
not surrounded in the short term as well as in the long term.
Let us try to develop a strategy for her on the scale of $(k+1)$-colonies,
while taking for granted certain possibilities for her on the scale of
$k$-colonies.
In the context of $(k+1)$-colonies the $k$-colonies will be called
\df{cells}.

A $(\k+1)$-colony $\S^{*}$ can be broken up  into rows $R_{1},\dots,R_{\Q}$.
If $\S^{*}$ is good then we have $\mu(R_{i})< \Q^{k}$ for at least one
$i\in\{1,\dots,\Q\}$.
In this row there is not even a single bad cell,
and at most one cell can be close to badness, the other ones will be
``safe''.
If the row is ``very'' good then even the single square close to badness
will not be spoiled soon.

Suppose that another $(k+1)$-colony $\D^{*}$ is adjacent to $\S^{*}$ on the
north, and that 
 \begin{enumerate}[(a)]
  \item Even the two $(k+1)$-colonies jointly are ``very good'': 
$\mu(\S^{*}\cup \D^{*}) < (1-\delta)Q^{\k+1}$.
Then there is a ``good'' column $C(1)$.
   \item $\S^{*}$ is far from badness: we will call it ``clean''.
It has a clean row $R'(0)$ the angel is in it.
   \item $\D^{*}$ is even farther from badness: we will call it ``safe''.
 \end{enumerate}
Then we may be able to pass from $\S^{*}$ to $\D^{*}$ along column $C(1)$,
passing to it in row $R'(0)$.
By the time we arrive into $\D^{*}$ it may not be safe anymore but
it is still clean, and we can pass into a clean row $R''(1)$ in it.
This simple scheme of passing from one big colony to the next will be
called a \df{step}.

This scheme has many holes yet, and we will fix them one-by-one, adding
new and new complications.
Fortunately the process converges.
Let us look at some of the issues.

 \begin {figure}
\begin {equation*}
\includegraphics{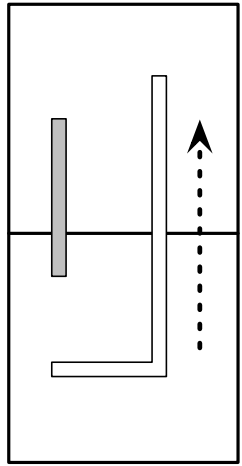}
\end {equation*}
\caption {The white path is the path taken by the angel to get from the
  lower big square to the upper one.
The grey area is to blame that the angel did not go straight.
\label{f.digression}}
 \end {figure}

 \begin{enumerate}[1.]
  \item\label{i.time-bound-idea}
The digression along row $R'(0)$ 
to get into column $C(1)$ causes a delay.
If our delays are not under control (especially in a recursive scheme like
ours) then the devil gains too much time and can put down too much weight.
Let $\U$ be a path of cells (viewed as a union of cells).
We will bound the time along it
by the following expression:
 \begin{align*}
   \tau_{\gc}(\U) + \cost\mu(\U).
 \end{align*}
The term $\tau_{\gc}(\U)$
is essentially the sum of the lengths of the straight runs
of cells in $\U$: we call it the \df{geometric cost}.
Now when the angel makes a digression along row $R'(0)$ to get to column
$C(1)$ it had a reason \emph{outside} its path of cells:
namely, she could not pass straight in column $C(0)$, because of some
weight $\mu(C(0)\setminus\R'(0))$ in this column.
We will upperbound the extra geometric cost by 
$\cost\mu(C(0)\setminus\R'(0))$.
 So the above sum can be estimated as
 \begin{align*}
 \tau_{\gc}(\U^{*}) + \cost\mu(C(0)\setminus\R'(0)) + \cost\mu(\U)
 \le  \tau_{\gc}(\U^{*}) + \cost\mu(\U^{*}).
 \end{align*}
This is how we ``make the devil pay'' for causing a digression: the time
bound formula is conserved when passing from the lower to the higher scale.

  \item Even in a clean row there is possibly one cell that is not safe,
in which we do not want to ``land'' but which we need to ``pass through''.
So we have to look at the situation of three $(k+1)$-colonies on top of
each other, the bottom one, $\S^{*}$ still clean, the top one, $\D^{*}$
still safe, but about the middle one we know only that the union of the
three big colonies together is good.
It is in the nature of the game that we will have to tackle a situation
like this without advance guarantee that we will be able to arrive at the
top.
We have to attempt it and may fail, ending up possibly where we started
(but then knowing that the devil had to spend a lot of his capital on
this).
The strategy will be essentially to try to pass in each column.
Because of the cleanness of $\S^{*}$ there is always a clean row to fall
back to.
This scheme of attempted passage will be called an \df{attack}.
Its implementation uses attacks on the cell level.

  \item In the implementation of attacks, we must be careful about the idea
    of ``falling back to'' a clean row, since the time bound introduced
    above assumes that the path we use is \emph{not self-intersecting}.
The attack must be implemented with some care to achieve this.\footnote{
It is a result in~\cite{Conway96angel} that if the angel has a winning
strategy she also has a self-nonintersecting one.
It is interesting that the strategy we develop is self-nonintersecting for
an apparently different reason.}
We introduce some primitive ``moves'' for the angel
that incorporate some of the possible retreating steps.
Also, to avoid retreat when it is not really needed, we introduce the notion
of a ``continuing attack'', which continues from the result of a failed attack
in the eastern neighbor column without falling back.

  \item With the extra kinds of move a self-nonintersecting implementation 
can be achieved, but there is still a problem.
The delay (extra geometric cost) of the attacks cannot always be
charged directly to some mass outside the path of cells.
Fortunately the case when it cannot is an attack on the level of
$(k+1)$-colonies containing many failed attacks on the level
of $k$-colonies.
A failed attack has a lot of mass \emph{inside it},
and part of it can be used to pay for the extra geometric cost. 
In our new time bound formula therefore if there are $n$
failed attacks they will a contribute a ``profit'' (negative cost):
 \begin{align*}
   \tau(\U) = \tau_{\gc}(\U) + \cost_{1}\mu(\U) - \cost_{2}\n\Q^{k}.
 \end{align*}
This will be sufficient to account for all the digressions.
 \end{enumerate}

\section{Concepts}

Let us proceed to the formal constructions.

\subsection{AD-games}

For our hierarchical solution, we generalize the angel-devil game into a
game called AD-game.

\subsubsection{Parameters}

 \begin{definition}
A union $\U = \U_{1}\cup\dots\cup\U_{n}$ of (horizontally or
vertically) adjacent squares of equal size will be called a \df{run}.
We write 
 \[
   |\U| = n |\U_{1}|.
 \]
 \end{definition}

 \begin{definition}
Let
 \begin{align}\label{e.delta-safe-swell}
      2/3 < \safe < 1, 
\quad 0 < \delta < \safe/2,
\quad 0 < \swell,
\quad \cost_{1}>\cost_{2}>0
 \end{align}
be real parameters to be fixed later.
We say that the run $\U = \U_{1}\cup\dots\cup\U_{n}$ 
is \df{bad} for measure $\mu$ 
if $\mu(\U)\ge |\U_{1}|$, otherwise it is good.
We will say that it is $i$-\df{good}
if 
 \begin{align*}
   \mu(\U) < (1-i\delta) |\U_{1}|.
 \end{align*}
In particular, $0$-good means simply \df{good}.
Similarly, we will call a run $i$-\df{safe} if 
$\mu(\U) < (\safe-i\delta) |\U_{1}|$.
 \end{definition}

 \begin{definition}
For an integer $\B>0$ and an integer $x$ we write
 \begin{align*}
   \flo{x}_{\B} = \B\cdot \flo{x / \B}.
 \end{align*}
Similarly for a vector $\u = (x,y)$ with integer coordinates,
 \begin{align*}
   \flo{\u}_{\B} = \B\cdot (\flo{x/\B}, \flo{y/\B}).
 \end{align*}
The set
 \begin{align*}
  \P = \{(0,1), (0,-1), (1,0), (-1,0)\}.
 \end{align*}
will be called the set of \df{directions}.
These directions will also be called \df{east, west, north, south}.
 \end{definition}

\subsubsection{The structure}

 \begin{definition}
An \df{AD-game} $\bbG$ with \df{colony size} $\B$
consists of steps alternating between a player called angel and
another one called devil.
At any one time, the current configuration determines the possibilities
open for the player whose turn it is.
We will only consider strategies of the angel.
More precisely, a game
 \begin{align}\label{e.gener-game}
 \bbG = \bbG(\B, \Moves_{\a}, \Moves_{\d}) 
 \end{align}
is defined as follows.
As before, the devil controls a measure $\mu_{t}$
and can add the amount
$\swell>0$ to the total mass at each time, so that $\mu_{\t+1}\ge \mu_{\t}$
and $\mu_{t+1}(\bbZ^{2})-\mu_{t}(\bbZ^{2}) \le\swell$.

The plane is partitioned into a lattice of squares
of size $\B$ called \df{colonies}.
Point $\u$ is contained in the colony
 \begin{align*}
 \cB(\u) = \flo{\u}_{\B} + \{0,\dots,\B-1\}^{2}.
 \end{align*}
  \end{definition}

The game is played using the following definitions.

 \begin{definition}
The game follows a sequence of moves $\r=1,2,\dots$, associated with an
increasing sequence of integer times $\t_{\r}$.
At move $\r$, times $t_{1},\dots,t_{\r}$ are already defined,
the angel stays at a point $\p_{\r} \in\cB(\p_{r})$.
The triple
 \begin{align*}
   (\t_{\r}, \p_{\r}, \mu_{\t_{\r}})
 \end{align*}
will be called an \df{essential configuration} of the game in step $\r$.
Let us call \df{default essential configuration}
the configuration $(0,(0,0),0)$, that is
the configuration at time 0, position 
at the origin $(0,0)$, the null measure.

The game starts at time $\t_{0}=0$ from position $\p_{0}=(0,0)$ 
with initial measure $\mu_{0}=0$.
The position
 \begin{align*}
   \w_{\r} = \flo{\p_{\r}/\B}
 \end{align*}
will be called the \df{colony position} at step $\r$.
 \end{definition}

\subsubsection{Moves and the angel's constraints}

Let us see what are the potential moves.

  \begin{definition}
There is a finite set
  \begin{align*}
    \Pi
  \end{align*}
 of symbols called \df{potential moves}.
Each move $\z$ has finite sets 
  \begin{align*}
    \cE(\z)\subset \cH(\z) \subset \bbZ^{2}
  \end{align*}
where $\cH(\z)$ is called the \df{template} of the move,
and $\cE$ is called the set of \df{end positions} in the template.
There is also an  element $\dest(z)\in\cE(\z)$
called the \df{destination position} of $\z$.
  \end{definition}

Let us see how moves will be used before defining them.

 \begin{definition}
At any time $\t_{\r}$, when staying in some \df{start colony} $\S$,
the angel chooses a \df{move} $\z=\z_{r}$ from the set 
 \begin{align*}
  \Moves_{\a}(\p_{\r},\mu_{\t_{r}})\subset\Pi.
 \end{align*}
She loses if this set is empty.
Let us call $\t_{\r}$ the \df{start time} of the move and $\t_{\r+1}$ the
\df{end time} (unknown yet) of the move.
The \df{body} of the move is the set
 \begin{align*}
  \M = \M(\w,\z) = \S+ \B\cdot\cH(z).
 \end{align*}
Colonies $\w+\u$ for $\u\in\cE(\z)$ are the possible \df{end
colonies} of the move.
There will be several more restrictions on the moves the angel can choose.
The devil will deposit the angel at a point in a certain end colony,
at the endtime of the move chosen by him.
There will be several restrictions on the devil's choice of place and time.
 \end{definition}

 \begin{definition}
A pair 
 \begin{align*}
  \amove=(\w,\z)
 \end{align*}
where $\w\in\bbZ^{2}$ and $\z\in\Pi$ is called a \df{located move}.
We call the colony $\cB(\w)$
where the angel stays at the beginning of the move
the \df{starting colony} for this move.
A \df{default located move} is a default move starting from the origin
$(0,0)$.
 \end{definition}

We will see that the devil's answer can end
certain moves in two different ways: in
``success'' or in ``failure''.
The intuitive meaning of failure is the failure to get through some
``obstacle''.
A successful move always ends in its destination colony.
A failed attack will end in one of its end colonies (possibly the
destination colony).
Formally, the definition of allowed moves, success and failure uses the
notion of a point ``clear'' for starting a certain kind of move
and a point ``clear'' for ending a certain kind of move in success or
failure.
These notions will be defined below.

Here is a description of the different kinds of move.
They are also illustrated in Figure~\ref{f.moves}.
These details will be motivated better once
it is understood how they are used.

 \begin {figure}
\begin {equation*}
\includegraphics{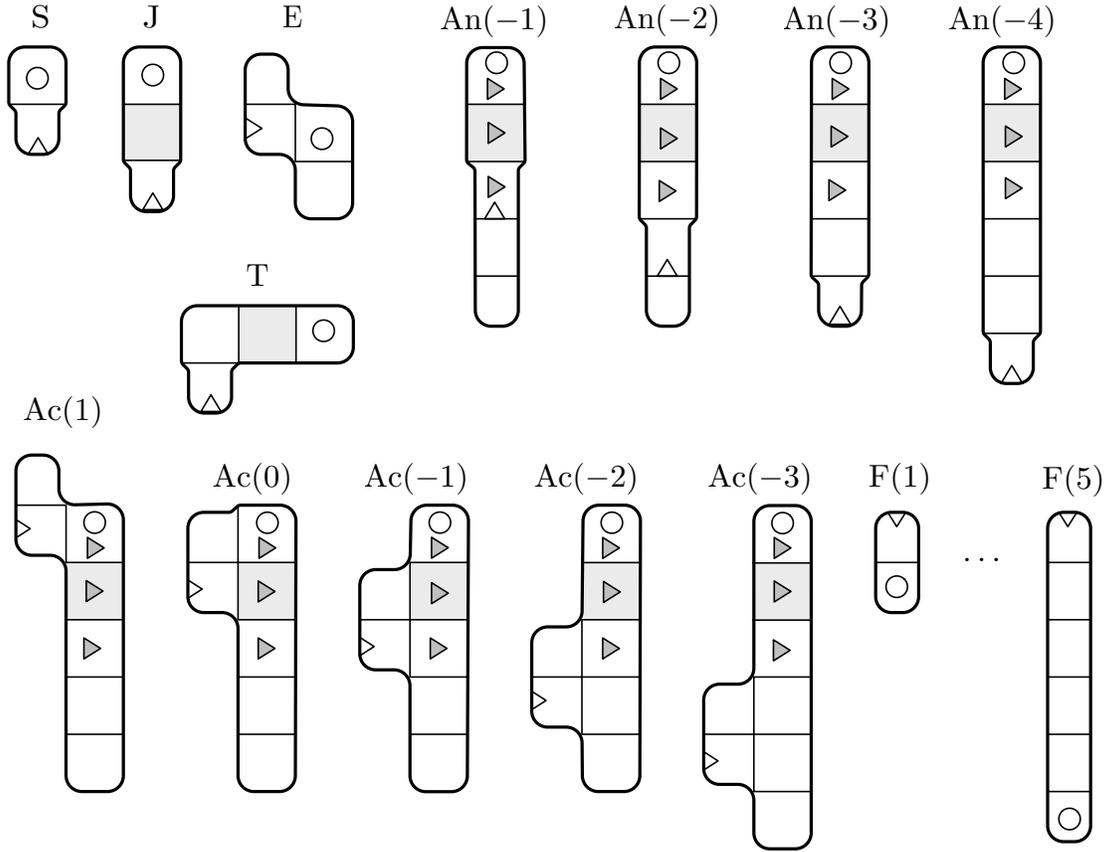}
\end {equation*}
\caption {Possible moves.
The template of each move is a union of squares.
Grey squares are the obstacle positions of jumps and attacks.
A white triangle marks the starting position.
White circle marks the destination position,
grey triangles the transit positions, showing the direction of the
sweep.
The level of a continuing attack is the difference between
the height of the starting position and the height of the obstacle.
Notation: S: step,  J: jump, T: turn, E: escape, F: finish, A: attack, n:
new, c: continuing, $(i)$: level or length $i$.
\label{f.moves}}
 \end {figure}

 \begin{definition}\label{d.move-types}
Moves are of the following kinds.
Each move has a \df{starting direction} and \df{landing direction}.
Only the turn move has a landing direction different from the starting
direction, for all other moves, their \df{direction} is both the starting
and landing direction.
Attacks and escapes also have a \df{passing direction}: so we talk about
a northward attack or escape passing to the east.

We also distinguish \df{new} moves and \df{continuing} moves.
The step, jump, turn and new attack are new moves, the continuing attack,
escape and finish are continuing moves.

 \begin{enumerate}[$\bullet$]

  \item 
A northward  \df{step} has $\cH=\{(0,0),(0,1)\}$, $\dest = (0,1)$.

  \item
A northward \df{jump} has $\cH=\{(0,0),(0,1), (0,2)\}$, $\dest = (0,2)$.
The position $(0,1)$ of a northward jump is called its \df{obstacle
  position}.
Other jumps are obtained if we rotate a northward step 
by multiples of 90 degrees.

  \item
A \df{northward escape continuing to the east} has
$\cH=\{(0,0),(0,1),(1,0),(1,-1)\}$, $\dest = (1,0)$.
Other escapes are obtained by reflection and rotations.

  \item
A \df{southward finish} of length $1\le i \le 5$ has $\dest = (0,-i)$,
$\cH=\{(0,0),\dots,(0,-i)\}$.
Other finishes are obtained by rotations.

  \item Let us describe the kind of moves called \df{northward attacks
    continuing to the east}.
Other attacks are obtained using reflexion and rotations.
It is more convenient to describe a \df{pre-template} $\cH'$
and a starting position $\p'$ from which the
real template $\cH$ is obtained by subtracting the starting position:
$\cH=\cH'-\p'$.
Of course, the starting position, destination position, and so on are also
shifted when going from $\cH'$ to $\cH$.
The pre-template is a superset of 
 \begin{align*}
   \cU = \{(0,-3),(0,-2),(0,-1),(0,0),(0,1)\},
 \end{align*}
with the obstacle in $(0,0)$, and the destination in $(0,1)$.

All attacks have destination position $(0,1)$ in $\cH'$.
They also have a set $\cT\subset\cE$ of \df{transit positions}: in
$\cH'$ these are $(0,-1),(0,0),(0,1)$.
Each attack has an integer \df{level} $\s$ with $-4\le\s\le 1$.

Attacks are also divided into \df{new} or \df{continuing} attacks.
New attacks have level $-4\le\s\le -1$, the
pre-template is equal to $\cU \cup (0,\s)$, and its start position is
$\p'=(0,\s)$.
For continuing attacks, $-3\le\s\le 1$,
$\cH' = \cU\cup\{(-1,\s),(-1,\s+1),(0,-\s)\}$, and
the start position is $\p'=(-1,\s)$.

  \item A nortward \df{turn} is a combination of a
northward step and an eastward or westward jump or new attack
(the latter is northward-continuing).
The direction of the jump or attack is called the \df{landing direction} of
the turn.

 \end{enumerate}
The body $\M$ of a 
northward continuing attack or escape consists of left and right columns.
The right column will be called the \df{reduced body} and denoted
 \begin{align*}
   \ul\M.
 \end{align*}
 \end{definition}

 \begin {figure}
\begin {equation*}
\includegraphics{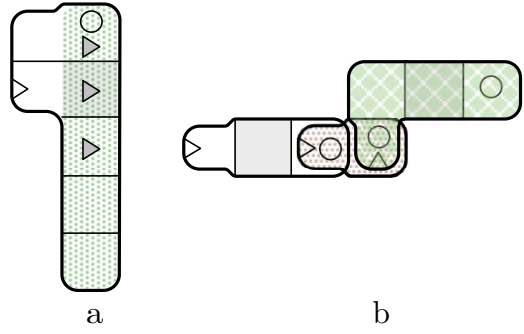}
\end {equation*}
\caption {a: The reduced body of a continuing attack is shaded.
b: An eastward jump followed by an eastward step followed by a
northward-eastward turn.
\label{f.jump-step-turn}}
 \end {figure}

\begin{definition}
The definition of $\Moves_{\a}$ and $\Moves_{\d}$ uses
the relations $\cK_{\start}$ and $\cK_{\fail}$.
These are defined as follows, for measure $\mu$.
We have
 \begin{align*}
     \cK_{\start} &\subset \cM\times\bbZ^{2}\times\P,
\\   \cK_{\fail}  &\subset \cM\times\bbZ^{2}\times\bbZ^{2}\times\Pi.
 \end{align*}
If $(\mu,\p,x)\in\cK_{\start}$ then point $\p$ is \df{clear} for measure
$\mu$ start a move in direction $x$.
If $(\mu,\p,\w,\z)\in\cK_{\fail}$ then point $\p$ is \df{clear} 
for measure $\mu$ to end the located move $(\w,z)$ in failure.
\end{definition}

 \begin{definition}
Let us specify the set $\Moves_{\a}(\p,\mu)\subset\Pi$ 
of possible moves for the angel when she is at position $\p$ with measure $\mu$.

We will only consider moves in the northward direction.
The requirements for other directions are obtained using rotation and
reflection.
 \begin{enumerate}[(a)]

  \item A new move $\z$ with starting direction $x$ is allowed only from a
    point $\p$ that is clear in direction $x$, that is we must have
$(\mu,\p,x)\in\cK_{\start}$.

        A continuing move $\z$ is allowed from a point $\p$ only if
$\p$ is clear for some failed located move $(\w,\z')$
having the same landing direction and passing direction: that is,
$(\mu,\p,\w,\z')\in\cK_{\fail}$.

  \item The weight of the body is at most $3\B$ (this bound is not
    important, just convenient).

  \item The destination colony is $(-1)$-safe.

  \item If the move is a step then the body is $(-1)$-safe.

  \item If the move is a jump then the body is $1/2$-good.

  \item If the move is a northward escape then its reduced body
(its right column) is $(-1)$-safe.

  \item If the move is an attack then:
   \begin{enumerate}[(A)]

     \item The run in the reduced body below the obstacle
       colony is $(-1)$-safe.

     \item If it is a new attack, the 
body is good; if it is a continuing attack, the reduced body is good.

   \end{enumerate}
  \item If the move is a turn then it satisfies the conditions of
its constituent step and (jump or attack).

 \end{enumerate}
Let the \df{default move} be an eastward step.
\end{definition}

 \begin {figure}
\begin {equation*}
\includegraphics{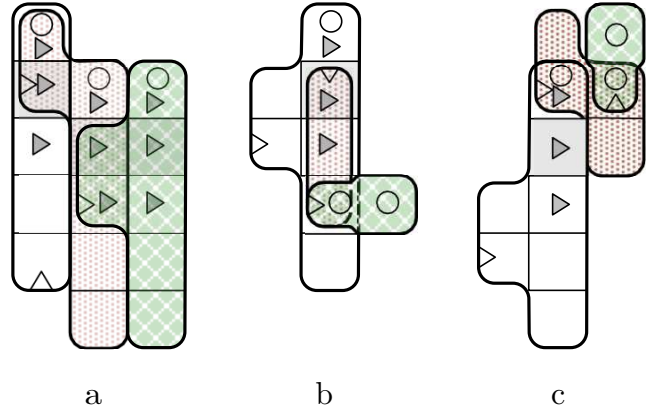}
\end {equation*}
\caption {How attacks can be continued.
a: A new attack fails on level 0 (on the level of its obstacle).  
Is followed by a continuing attack which
picks up there (as shown by the first pair of grey and white triangles).
This attack is of level $1$ since the obstacle in the next column is one 
unit below the transit location where the failure occurred.
The second attack fails on level $-1$, and is followed by a continuing
attack of level $-1$ since the next obstacle is on the same level as the
previous one.
b: A continuing northward 
attack fails on level 0, and is followed by a southward finish move
and an eastward step.
c: A continuing attack fails on level 1, and is followed by an escape move
and a northward step.
\label{f.continue}}
 \end {figure}

\subsubsection{Paths and the devil's constraints}

Before giving the devil's constraints, some more definitions are needed.

 \begin{definition}
A \df{configuration} is a tuple
 \begin{align*}
   (\t, \p_{\r}, \mu, j).
 \end{align*}
Here, $(\t,\p,\mu)$ is an essential configuration, and the symbol 
$j\in\{\success,\fail\}$
shows whether the previous move of the angel \df{suceeded} or \df{failed}.
The \df{default configuration} 
consists of the default configuration with
$j=\success$ added.
Let 
 \begin{align*}
   \Lmoves(\B),\Configs
 \end{align*}
be the sets of located moves and configurations respectively.
A sequence $(\amove_{1},\dots,\amove_{\m})$ of located moves
with $\amove_{i}=(\w_{i},\z_{i})$
is called a \df{path} if $\w_{i+1}-\w_{i}\in\cE(\z_{i})$ holds
for all $i<\m$.
\end{definition}

Simple paths will be defined to be essentially self-nonintersecting.
But the finish move and some steps following it in the same direction
can overlap with the 
failed attack before it, so the following definition takes this into
account.

 \begin{definition}
Take a path, and let $\M_{1},\dots,\M_{\m}$ be all the bodies of its
located moves \emph{except the finish moves}.
Let $\S_{i}$ be the starting colony of the move of $\M_{i}$.
The path is \df{simple} if for all $i$ we have
$\M_{i+1}\cap\bigcup_{j=1}^{i}\M_{j}=\S_{i+1}$.
 \end{definition}

 \begin{definition}
If the angel's move is $\z_{\r}$, the set of possible moves of the devil is
 \begin{align*}
 \Moves_{\d} = \Moves_{\d}(\p_{\r},\mu_{\t_{\r}},\z_{\r}) \subseteq \Configs.
 \end{align*}
The devil's move is the next configuration
$\dmove_{\r+1} = (\t_{\r+1},\p_{\r+1},\mu_{\t_{\r+1}},j_{\r+1})$,
but his choice is restricted.
First, of course $\t_{\r} \le \t_{\r+1}$, further
$0 \le \mu_{\t_{\r+1}} - \mu_{\t_{r}} \le \swell(\t_{\r+1}-t_{\r})$. 
The other restrictions defining $\Moves_{\d}$ can be divided into
\df{spatial} and \df{temporal} restrictions.
Let us give the spatial restrictions first, assuming
that the devil deposits the angel at a point $\p$ with measure $\mu$.
  \begin{enumerate}[(a)]

  \item
In case of success, $\p$ is clear in the landing direction $x$ of $\z$,
that is $(\mu,\p,x)\in\cK_{\start}$.

In case of failure the located move $(\w,\z)$ was a continuing move and
$\p$ is clear for failure for this move, that is
$(\mu,\p,\w,\z)\in\cK_{\fail}$.

  \item If a new northward attack is not successful
then the northward run of the body beginning from the starting colony
is bad at its end time (of course, the same goes for other directions).

   If a continuing attack is not successful
then the reduced body is bad at its end time.
 \end{enumerate}
\end{definition}

Before giving the devil's temporal restrictions, some more notions
concerning histories are needed.

\begin{definition}
Given a path $(\amove_{1},\dots,\amove_{\m})$ with
$\amove_{i}=(\w_{i},\z_{i})$, and a sequence
$(\dmove_{1},\dots,\dmove_{\m+1})$
of configurations $\dmove_{i} = (\t_{i},\p_{i},\mu_{i},j_{i})$,
the sequence
 \begin{align*}
   (\dmove_{1},\amove_{1},\dmove_{2},\amove_{2},
   \dots,\dmove_{\m}\amove_{\m},\dots,\dmove_{\m+1})
 \end{align*}
will be called a d-\df{history} if its configurations
obey the spatial restrictions and the restrictions on $\mu_{t}$ for the devil
given above.

Under the same restrictions, the sequence
$(\dmove_{1},\amove_{1},\dots,\dmove_{\m},\amove_{\m})$
is called an a-\df{history}.
The set of all a-histories or d-histories will be denoted by
 \begin{align*}
   \Histories_{\a}(\B), \Histories_{\d}(\B).
 \end{align*}
The default d-history is $(\dmove_{0})$, consisting of a single default
configuration $\dmove_{0}$. 
If $\chi$ is a history then let 
 \begin{align*}
 \amove(\chi)
 \end{align*}
be the path consisting of the angel's moves in it.
\end{definition}

\begin{definition}
We will use the addition notation, for example
 \begin{align*}
   (\amove_{1},\dmove_{1},\dots,\dmove_{\m},\amove_{\m}) = 
   (\amove_{1},\dmove_{1},\dots,\dmove_{\m}) + \amove_{\m}.
 \end{align*}
For another addition notation, if $\chi =
(\dmove_{1},\amove_{1},\dots,\dmove_{i},\dots,\amove_{\m},\dmove_{\m+1})$ 
is a d-history then we can write
$\chi=\chi_{1}+\chi_{2}$ where
$\chi_{1} = (\dmove_{1},\dots,\amove_{i-1},\dmove_{i})$, 
$\chi_{2} = (\dmove_{i},\dots,\amove_{\m},\dmove_{\m+1})$
the d-histories from which it is composed.
A d-history $(\dmove,\amove,\dmove')$ will be called a \df{unit}
d-\df{history}. 
Thus, every d-history is the sum of unit d-histories.

Similarly, if $\chi = (\dmove_{1},\amove_{1},\dots,\dmove_{i},\dots,\amove_{\m})$
is an a-history then we can write
$\chi=\chi_{1}+\chi_{2}$ where
$\chi_{1} = (\dmove_{1},\dots,\amove_{i-1})$, 
$\chi_{2} = (\dmove_{i},\dots,\amove_{\m})$,
the a-histories from which it is composed.
 \end{definition}

 \begin{definition}
An 1-step a-history $(\dmove,\amove)$ will be called a \df{record}.
Thus, every a-history is the sum of records.
Let 
 \begin{align*}
   \Records(\B) = \Configs\times\Lmoves(\B)
 \end{align*}
denote the set of all possible records.
The \df{default record} has the form
$\record_{0} = (\dmove_{0},\amove_{0})$
consisting of 
the default configuration and the default move (eastward step).
 \end{definition}

 \begin{definition}\label{d.time-bound}
We define a \df{time bound} $\tau(\chi)$ for a history $\chi$.
Let $\mu$ be the measure in the last configuration, let
$\U$ be the union of the bodies of all located moves in the path
$\cP=\amove(\chi)$ and let 
$\n$ be the number of failed continuing attacks in $\chi$.
Then
 \begin{align*}
   \tau(\chi) = \cost_{1}\mu(\U) - \cost_{2}\n\B + \tau_{\gc}(\chi)
 \end{align*}
where $\tau_{\gc}(\chi)$ is the called the \df{geometric cost}, or
the \df{geometric component} of the time bound,
which we will define now.
Let $\chi$ be a unit history containing a northward move that is not a
turn, and let 
$y_{\r}$ and $y_{\s}$ be the $y$ coordinates of the starting point and
the endpoint respectively.
We define the \df{geometric cost} of $\chi$ as 
$\tau_{\gc}(\chi) = y_{\s}-y_{\r}$.   
(For an attack this can be negative.)
For moves in other directions, the geometric
cost is obtained by rotation accordingly.
If $\chi$ is a single turn then $\tau_{\gc}(\chi)=8\B$.
If $\chi$ is an arbitrary history then
let us decompose it into a sequence of unit histories $\chi_{i}$ and
define $\tau_{\gc}(\chi)=\sum_{i} \tau_{\gc}(\chi_{i})$.

Now, the \df{temporal restriction} of the devil is the following:
the time of any d-history with a simple path is 
bounded by its time bound defined above.

Let us call an a-history or d-history \df{legal} if both the angel's and
the devil's moves in it are permitted in the game, based on the sequence of
preceding elements.
 \end{definition}

 \begin{remarks}\
 \begin{enumerate}[1.]
  \item
Though the time bound contains negative terms, it never becomes
negative because of $\cost_{2}<\cost_{1}$: it can even be lowerbounded by
a constant times the number of moves in the history.
  \item
Why do we ``profit'' only from failed
continuing attacks and not from failed new attacks?
This will be understood later, when we implement a scaled-up attack.
A new attack typically needs some preparation steps that must be charged
against its mass.
 \end{enumerate}
 \end{remarks}

In terms of histories, game $\bbG$ can be described as follows.
It is started from some initial configuration $\dmove_{1}$, with
$\mu=0$, say in the middle of the cell $(0,0)$.
Now the angel adds her located move $\amove_{1}$ to the history.
The devil follows with the the next configuration 
$\dmove_{2}$, and so on.
So the angel's moves can be viewed as located moves, the devil's
moves as configurations.
Of course, each of these obeys the constraints given above.

 \begin{example}
In order to specify a simple example of an AD-game we must specify
the parameters $\Q,\safe,\delta,\cost_{i}$ 
and the relations $\cK_{\start},\cK_{\fail}$. 
Let $\Q=1$ and let us fix the other
parameters in any way obeying the above restrictions
(more restrictions come later to make scale-ups possible).

Let $(\mu,\p,\z)\in\cK_{\start}$ if $\p$ is $(-1)$-safe for $\mu$.
In case of a northward attack $\z$ let
$(\mu,\p,\z,\w)\in\cK_{\fail}$ if in the body of located move $(\w,\z)$,
the point $\p$ is below the obstacle colony of the attack.

It is not hard to see that this game is
essentially equivalent to the original angel-devil game.
 \end{example}

\subsection{Scaling up}\label{ss.scale-up}

Let us define some concepts of cleanness for runs.

 \begin{definition}\label{d.clean-run}
In a run $\U$ of colonies let us call the \df{obstacle} an element
with largest weight (say the first one).
A run will be called $i$-\df{step-clean} if 
every run of two consecutive colonies in $\U$ is $i$-safe.
A run $\U$ of colonies of game $\bbG$ is $i$-\df{unimodal} for some integer
$i$ if the runs on both sides of the obstacle are $i$-step-clean.
It will be called $i$-\df{clean} if it is $i$-unimodal and
every run of three consecutive colonies in $\U$ is $(i+1)$-good.

A run will be called \df{clean}, and so on if it is 0-clean, and so on.

Let $\U_{1},\dots,\U_{\n}$ be a clean run, and let 
$1=n_{1}<n_{2}<\dots<n_{\m}=\n$ be a sequence of indices 
with $n_{i+1}\le n_{i}+2$ such that
$\U_{n_{i}}$ is safe, and if $n_{i+1}=n_{i}+1$ then also
$\U_{n_{i}}\cup\U_{n_{i+1}}$ is safe.
This sequence will be called a \df{walk}: it consists of steps
and jumps that can be carried out.
 \end{definition}

Now we are ready to define scaled-up games.

 \begin{definition}
Using the integer parameter $\Q$ introduced above, let
 \begin{align*}
 \B^{*}=\Q\B.
 \end{align*}
For clarity we will generally denote by $\record^{*}$ the elements of
$\Records(\B^{*})$, and use the ${}^{*}$ notation similarly also in other
instances where it cannot lead to confusion.
Or, if $\record$ denotes an element of $\Records(\B^{*})$ we might denote
by $\record_{*}$ a record of $\Records(\B)$.

Each colony $\U^{*}$ of size $\B^{*}$ is the union of $\Q$ 
\df{rows} of colonies of size $\B$, and also the union of $\Q$ \df{columns}
of them.
Let $\U^{*}$ consist of colonies $\U_{ij}$ ($1\le i,j\le \Q$) of size $\B$.
In the context of game $\bbG^{*}$ the latter will be called \df{small
  colonies}, or \df{cells}.

Given a game $\bbG$ of colony size $\B$, the game
 \begin{align*}
 \bbG^{*} = \bbG(\B^{*}, \Moves^{*}_{\a}, \Moves^{*}_{\d})
 \end{align*}
will be defined similarly to game $\bbG$, but with colony size $\B^{*}$,
except that the sets $\cK^{*}_{\start}$,
$\cK^{*}_{\fail}$ are defined as a function of the corresponding sets in 
$\bbG$, as given below.
\end{definition}

 \begin {figure}
\begin {equation*}
\includegraphics{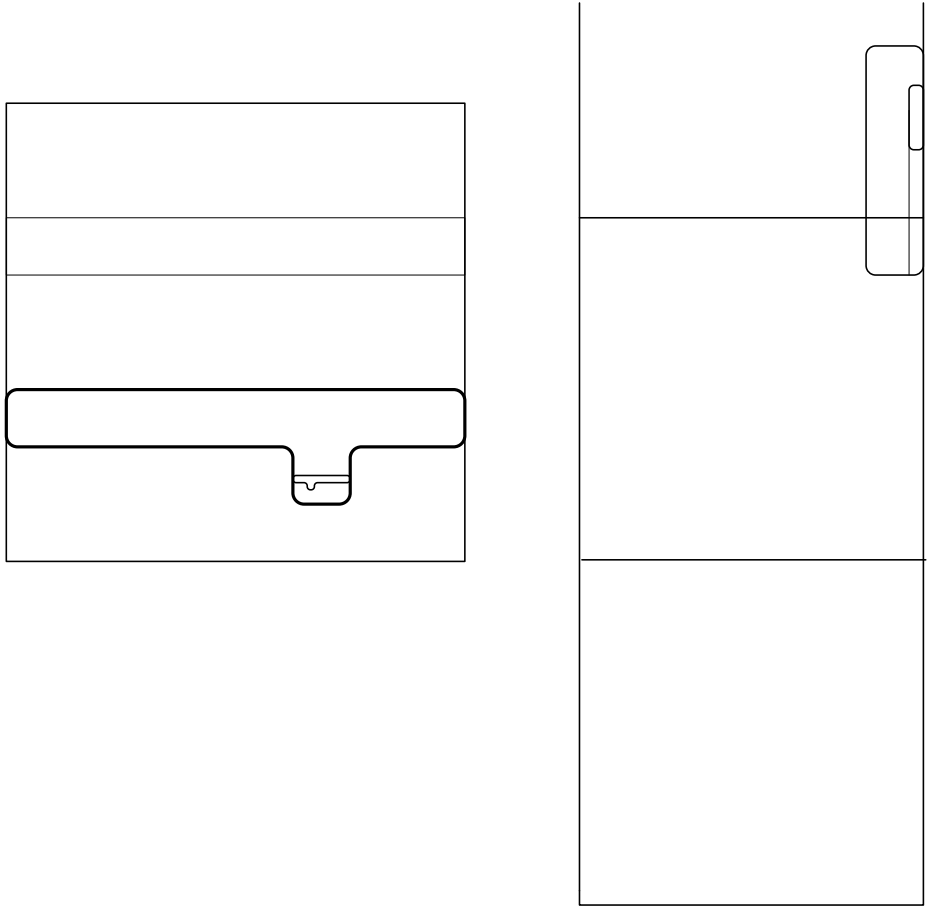}
\end {equation*}
\caption {A northward clear point and a point that is
clear for failure in a northward attack passing to the east.
\label{f.clear}}
 \end {figure}

 \begin{definition}
Let a point $\p$ be in a cell $\U$ within the big colony $\U^{*}$,
and let $\mu$ be the current measure.

We will say that a point $\p$ is \df{clear} in direction $x$
with respect to measure $\mu$ in game $\bbG^{*}$, that is 
$(\mu,\p,x)\in\cK^{*}_{\start}$ if (assuming without loss of generality
that $x=(0,1)$, that is northward):
 \begin{enumerate}[(1)]
  \item It is northward clear for $\mu$ in game $\bbG$, that is 
$(\mu,\p,x)\in\cK_{\start}$.
  \item Colony $\U^{*}$ is $(-2)$-safe.
  \item There are at least $(\kappa-2)$ clean rows in $\U^{*}$ north of $\U$,
and the first one is reachable from it in one (allowed) northwards step.
 \end{enumerate}

Let $(\w,\z)$ be a located move with northward landing direction
and eastward passing direction in $\bbG^{*}$ such that $\U^{*}$ is one of
the end colonies in the body $\M^{*}$ of $(\w,\z)$.
We will say that a point $\p$ is clear for failure of $(\w,\z)$ in
$\bbG^{*}$ if one of the following conditions is satisfied.
 \begin{enumerate}[(1)]

  \item In the game $\bbG$, 
it is eastward clear (that is, $(\mu,\p,(1,0))\in\cK_{\start}$)
and the column of cells in the body $\M^{*}$ 
south of the cell $\U$ (including $\U$) is $(1/2)$-step-clean.

  \item There is a northward continuing located move $(\w',\z')$ passing to
the east such that $\p$ is clear for failure for $(\w',\z')$,
(that is $(\mu,\p,\w',\z')\in\cK_{\fail}$), 
and the column of cells in the body $\M^{*}$ south of the cell $\U$ 
(not including $\U$) is $(1/2)$-step-clean.

 \end{enumerate}
 \end{definition}

Intuitively, a point is northward clear if starting from it,
we have some freedom to choose in which column to move northward.
A point is clear for failure in a northward continuing attack passing to
the east if it is on the east edge of the attack body, and a southward
``retreat'' is possible from it.

We are interested in translating
moves of the angel in $\bbG^{*}$ into sequences of moves in $\bbG$.
Recall that a record is a one-step a-history.

 \begin{definition}\label{d.phi}
Consider the pair of functions
 \begin{align*}
   \phi : \Histories_{\d}(\B)\times\Records(\B^{*}) &\to \Pi\cup\{\halt\},
\\  (\chi,\dmove,\amove) &\mapsto \phi(\chi \mid \dmove,\amove),
\\   \J : \Histories_{\d}(\B)\times\Records(\B^{*}) &\to \{\success,\fail\},
\\   (\chi,\dmove,\amove) &\mapsto \J(\chi \mid \dmove,\amove).
 \end{align*}
To simplify writing if $(\t',\p',\mu',j')$ is the last element of
$\chi$ with $\w'=\flo{\p}_{\B}$ then we will write
 \begin{align*}
     \hat\phi(\chi\mid\dmove,\amove) = (\w',\phi(\chi \mid \dmove,\amove)),
\quad   \hat\J(\chi\mid\dmove,\amove) = (\t',\p',\mu',\J(\chi\mid \dmove,\amove)).
 \end{align*}
We say that $(\phi,\J)$ is 
an \df{implementation map} from game $\bbG$ to game $\bbG^{*}$
if it has the following properties, whenever $\chi$ is a legal d-history:
 \begin{enumerate}[(a)]
  \item
If $\phi(\chi \mid \dmove,\amove) \ne \halt$ then 
it is a permitted move of the 
angel in game $\bbG$ following any history ending in $\chi$.
  \item\label{i.when-halt}
If $\phi(\chi \mid \dmove,\amove) = \halt$ then 
$\chi$ consists of at least two steps, and
$\hat\J(\chi\mid\dmove,\amove)$ is a permitted move of 
the devil in the game $\bbG^{*}$ following any history ending in
$(\dmove,\amove)$.
  \end{enumerate}
 \end{definition}

The implementation can be viewed in the following way.
The angel and devil of game $\bbG^{*}$ play while pursuing
a parallel game of $\bbG$ as follows.
When it is the angel's turn in $\bbG^{*}$
she will translate her move
into a \df{local strategy} of $\bbG$, a strategy that can also halt.
Step $\r$ of game $\bbG^{*}$ corresponds to step $\s_{\r}$ of
game $\bbG$, but at the essential configuration as in game $\bbG^{*}$.
A one-step d-history
$(\dmove_{\r},\amove_{\r},\dmove_{\r+1})$ of game $\bbG^{*}$
will correspond to a longer d-history
 \begin{align*}
   (\dmove_{*\s_{\r}},\amove_{*\s_{\r}},\dmove_{*(\s_{\r}+1)},\dots,
   \amove_{*\s_{\r+1}-1},\dmove_{*\s_{\r+1}})
 \end{align*}
of game $\bbG$ generated as follows.
Between steps $\s_{\r}$ and $\s_{\r+1}$ of game $\bbG$, 
the angel will use the following strategy.
Let 
 \begin{align*}
 \chi_{*i}=(\dmove_{*\s_{\r}},\dots,\amove_{*(i-1)}\dmove_{*i})
 \end{align*}
be the d-history in the game $\bbG$ since the
start of the implementation of the current move of $\bbG^{*}$,
where $\dmove_{*i}=(\t_{*i},\p_{*i},\mu_{*i},j_{*i})$.
Then the next move $\amove_{*i}$ in game $\bbG$ is computed as follows,
with $\record_{\r} = (\dmove_{\r},\amove_{\r})$:
 \begin{align*}
  \amove_{*i} = \hat\phi(\chi_{*i} \mid \record_{\r}).
 \end{align*}
From here, the local d-history is extended as
$\chi'_{*}=\chi_{*i}+\amove_{*i}$. 
The devil of $\bbG^{*}$ allows the devil of
$\bbG$ to play and generate
the d-history $\chi_{*{i+1}}=\chi'_{*} + \dmove_{*(i+1)}$.
Step number $\s_{\r+1}$ will be reached in $\bbG$
when the angel in game $\bbG$ chooses the symbol $\halt$.
At this point the devil of game $\bbG^{*}$
chooses the new configuration of game $\bbG^{*}$, namely
 \begin{align*}
   \dmove_{\r+1} = \hat\J(\chi_{*}\mid\record_{\r}).
 \end{align*}
This next configuration of game $\bbG^{*}$ is almost the same as the
last configuration of the subgame:
the essential configurations are the same, 
only the question whether the last move of $\bbG^{*}$
(the one we have just implemented) is 
successful will be decided using the function $\J(\cdot)$.

Thus to any d-history $\chi$
of $\bbG^{*}$ corresponds some d-history $\chi_{*}$ of game
$\bbG$ that shares with $\chi$ the essential initial and final
configurations.
The correspondence assigns disjoint subhistories of $\chi_{*}$ to each unit
history of $\chi$.
Of course, $\chi_{*}$ is not a function of $\chi$ but the map $(\phi,\J)$ 
translates any
strategy that the angel has for $\bbG^{*}$ into a strategy for $\bbG$.

 \begin{definition}
An \df{amplifier} consists of a sequence of games $\bbG_{1}$,$\bbG_{2}$,
$\dots$ where $\bbG_{\k+1}=\bbG_{\k}^{*}$
and implementation maps $(\phi_{\k},\J_{\k})$ from
$\bbG_{\k}$ to $\bbG_{\k+1}$.
 \end{definition}

In the amplifier built in the present paper 
the maps $(\phi_{\k},\J_{\k})$ will not depend on $\k$ in any significant
way: only the scale changes.

\subsection{The main lemma}\label{ss.main-lemma}

Before stating the 
main lemma, from which the theorem will follow easily, let us constrain our
parameters.

Let
 \begin{align}
    \label{e.nu-kappa}
 \nu = 17\Q,
\quad \kappa \ge 12
 \end{align}
be an integer parameters.
The parameter $\nu$ will serve as the upper bound on the number of small
moves in an implementation of a big move.
The parameter $\kappa$ will have the approximate role that in a safe colony
there will be at least $\kappa$ guaranteed good rows.
Below, the parameter $\growth$ has the meaning that the time taken by
a single move will be upperbounded by $\growth\B$.
Let
 \begin{align}
  \label{e.Q-lb}
                \Q &> 2\kappa/(1-\safe),
\\\label{e.cost2}
         \cost_{2} &= 8,
\\\label{e.cost1}
         \cost_{1} &> 22\Q/(1-\safe),
\\\label{e.growth}
           \growth &= 2(6 + 3\cost_{1}),
\\ \label{e.delta-ub}
     \delta &< \min((1-\safe)/6, (\safe-2/3)/\Q),
\\ \label{e.swell-ub}
     \swell &< \min(\delta/(3\nu\growth), 1/(2\cost_{1})).
 \end{align}
These inequalities can be satisfied by first choosing $\Q$ to
satisfy~\eqref{e.Q-lb}, then $\cost_{1}$ to satisfy~\eqref{e.cost1}, then
choosing $\delta$ to satisfy the first inequality of~\eqref{e.delta-ub}
and finally choosing $\swell$ to satisfy~\eqref{e.swell-ub}.

 \begin{remark}
We made no attempt to optimize the parameters.
Not fixing them, only constraining them with inequalities has
only the purpose to leave the ``machinery'' somewhat open to later
adjustments.
Considering $\Q$ as a variable,
these relations imply $\swell \sim \Q^{-2}$.
With more careful analysis one could certainly achieve $\swell\sim \Q^{-1}$.
 \end{remark}

 \begin{lemma}[Main]
If our parameters satisfy the above relations then there is an
implementation $(\phi,J)$.
 \end{lemma}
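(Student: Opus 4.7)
The plan is to construct $(\phi,\J)$ by case analysis on the kind of big move the angel chooses in $\bbG^{*}$. For each case I will specify the local strategy (a sequence of small moves ending in $\halt$) and verify three things: each produced small move is legal in $\bbG$; the terminal essential configuration satisfies the devil's spatial constraints at the big scale, so that $\hat\J$ is a legal devil response in $\bbG^{*}$; and the small-scale time bound of Definition~\ref{d.time-bound} of the produced d-history does not exceed the unit time bound of the corresponding big move. The ``at least two steps'' requirement of Definition~\ref{d.phi}, part~\ref{i.when-halt}, will be met because every implementation begins with at least one preparatory small step before issuing $\halt$.

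First I would treat the non-attack big moves: step, jump, turn, escape, finish. By the definition of $\cK^{*}_{\start}$, the angel starts from a point $\p$ in some cell $\U$ of the start big colony $\S^{*}$ which is small-scale clear in the landing direction and has $\kappa-2$ clean rows of $\S^{*}$ to the north of $\U$. The safety hypothesis on the big body combined with the inequality $\Q > 2\kappa/(1-\safe)$ of~\eqref{e.Q-lb} guarantees a column $C$ running through the whole body whose interior is $(1/2)$-step-clean. The local strategy then walks northward along a clean row inside $\S^{*}$, using the walk from Definition~\ref{d.clean-run}, until reaching $C$; walks northward through $C$ into the destination big colony $\D^{*}$; and ends at a cell of $\D^{*}$ which is itself clear in the landing direction, where $(-2)$-safety of $\D^{*}$ supplies the required $\kappa-2$ northward clean rows. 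Each small move is legal because every two-cell run on a clean row is safe. The horizontal detour through the clean row has geometric cost $O(\Q\B) = O(\B^{*})$; it is forced by mass outside $C$, and~\eqref{e.cost1} makes $\cost_{1}$ times this mass pay for it, exactly as in item~\ref{i.time-bound-idea} of the informal idea.

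The harder case is a big attack. The local strategy sweeps the small columns of the body $\M^{*}$ (or reduced body $\ul\M^{*}$) from west to east, launching a small new northward attack in each. On small-scale success we reach the target cell; on small-scale failure at level $\s$, the devil's spatial restriction forces the small run below the obstacle to be bad, and the $\cK_{\fail}$-clearness of the failure point allows us either to continue into the east-adjacent column as a small continuing attack or to retreat by a small finish followed by a small step back to a clean row of $\S^{*}$, which is available because $\S^{*}$ is clean. The path stays simple because the finish is the designated exception in the definition of simple path and each retreat step lands on fresh cells. Each failed small continuing attack contributes the profit $-\cost_{2}\B$ with $\cost_{2}=8$ from~\eqref{e.cost2}, and these profits are designed to pay for the geometric cost of all the horizontal retreats needed to thread the sweep, closing the time-bound accounting.

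The main obstacle I expect is verifying the terminal spatial conditions for the big devil response while the devil continues spending mass during the implementation. In the success branch the landing point must be $\cK^{*}_{\start}$-clear at the big scale; in the failure branch of a big attack the big reduced body must be bad and the landing point must be $\cK^{*}_{\fail}$-clear, with a $(1/2)$-step-clean south column. Both reduce to showing that the cleanness and safety invariants assumed at the start of the big move still hold at its end, even though up to $\nu=17\Q$ small moves have elapsed and the devil may have deposited as much as $\nu\growth\swell\B$ of new mass; the upper bound~\eqref{e.swell-ub} on $\swell$ is calibrated exactly so that the erosion of every $i$-safety and $i$-goodness margin used by the strategy is strictly less than $\delta$. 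Proving this margin preservation uniformly for every cell run touched by the construction, and for every legal devil response, is the bookkeeping I expect to dominate the remaining work.
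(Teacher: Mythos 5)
Your overall plan — case analysis on the big move, verify legality of small moves, verify the big-scale devil constraints at the terminal configuration, and verify the time bound, with scapegoat mass and profits paying for extra geometric cost — is the same skeleton as the paper's proof, and the sketch of the attack sweep (success/failure branches, retreat via finish + step, profit $\cost_{2}\B$ per failed continuing attack) is essentially the paper's. But there is a real gap in how you dispose of the ``non-attack'' moves, and it is the gap that the paper spends the most technical effort filling.

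You assert that for step, jump, turn, escape and finish, ``the safety hypothesis on the big body combined with~\eqref{e.Q-lb} guarantees a column $C$ running through the whole body whose interior is $(1/2)$-step-clean.'' That is true for a big step, whose body is required to be $(-1)$-safe, because Lemma~\ref{l.good} then yields $\ge\kappa$ clean columns. It is \emph{not} true for a big jump or a big attack: a jump's body is only required to be $1/2$-good, and an attack's body (or reduced body) only good, i.e.\ $\mu(\M^{*})$ can be arbitrarily close to $\B^{*}$, well above the hypothesis $\mu\le\Q\B(\safe+\delta)+\B$ of Lemma~\ref{l.good}. In this regime there may be \emph{no} clean column at all, and the paper proves instead (Lemma~\ref{l.three-cases}) that every column is then unimodal and the ``marginal case'' of Definition~\ref{d.straight} must be handled by the same sweep machinery you reserve for attacks — including evasions between neighboring obstacles (Lemma~\ref{l.distant-obstacles}), the distinction between straight and marginal cases, and the careful charging of the first and last columns of the sweep (which carry no profit). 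Your proposal silently assumes away exactly this case; without the marginal-case analysis for big jumps (and for big turns, which contain a jump-or-attack component), the implementation map cannot be defined on the jump move, and the lemma is not proved. A secondary, smaller omission: you assert every implementation issues ``at least one preparatory step'' before $\halt$, but you do not exhibit it for the move types (e.g.\ a small escape or finish) where this is not obvious; the paper's construction guarantees it by explicit design.
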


We prove this lemma in the following sections, and then apply it to
prove the theorem.

\section{Relations among parameters}

The following simple lemma illustrates the use of cleanness.

 \begin{lemma}
If a run is clean then between any two safe points of it there is a walk.
 \end{lemma}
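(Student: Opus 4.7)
The plan is a case analysis on where the obstacle $\U_c$ of the run lies relative to the two given safe endpoints $\U_a, \U_b$; without loss of generality assume $a \le b$.

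\emph{First case: the obstacle does not lie strictly between $a$ and $b$.} Then the sub-run $\U_a, \U_{a+1}, \dots, \U_b$ is contained in one of the two step-clean sides of $\U_c$. Step-cleanness of that side says every run of two consecutive colonies is safe; hence each colony in the sub-run is safe (being a subset of a safe pair) and each consecutive pair is safe. The sequence $n_i = a+i-1$ for $i=1,\dots,b-a+1$ — a walk using only single steps — then satisfies every requirement in the definition.

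\emph{Second case: $a < c < b$.} I would walk by single steps along the step-clean left side from $\U_a$ up to $\U_{c-1}$, insert one jump of length $2$ from $\U_{c-1}$ over the obstacle to $\U_{c+1}$, and then walk by single steps along the step-clean right side from $\U_{c+1}$ to $\U_b$. The jump is admitted by the walk definition because both $\U_{c-1}$ and $\U_{c+1}$ are safe (each is contained in a safe consecutive pair on its side of the obstacle), and the index gap is exactly $2$ so no pair-safety condition is imposed. The two step segments are handled exactly as in the first case.

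The bulk of the proof is just bookkeeping: verifying for each constructed index $n_i$ that $\U_{n_i}$ is safe, and that every stepping pair $\U_{n_i} \cup \U_{n_{i+1}}$ is safe. Both facts are immediate from step-cleanness of the relevant side. The only point that needs a little care is the degenerate sub-case when the obstacle coincides with one of the endpoints $\U_a$ or $\U_b$; there I would replace the initial or terminal step by a jump of length $2$, whose admissibility follows from the safety of the endpoint (by hypothesis) together with the safety of the colony two positions away (inherited from step-cleanness of the opposite side). I do not expect any real obstacle here — the lemma is intended as an illustration of how the definition of cleanness is tailored so that walks between safe points always exist, and the $(i+1)$-goodness clause of cleanness is held in reserve for the later game-theoretic uses of a walk rather than needed for the combinatorial existence statement itself.
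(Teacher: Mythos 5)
Your proof follows essentially the same route as the paper's (very terse) proof: observe that step-cleanness of each side of the obstacle makes every non-obstacle colony and every consecutive non-obstacle pair safe, then walk by single steps, inserting one jump over the obstacle when it sits strictly between the two safe endpoints. You are somewhat more careful — you spell out the boundary case where the obstacle coincides with an endpoint and explicitly verify the pair-safety conditions, neither of which the paper's two-line argument bothers with — but the underlying idea is identical.
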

 \begin{proof}
A clean run contains at most one unsafe colony, the obstacle.
Suppose it is $\U_{\k}$ with $1<\k<\n$.
Then we can make $n_{i}=i$ for all $i<\k$, then $n_{i}=i+1$ for 
$\k\le i < \n$.
 \end{proof}

 \begin{lemma}\label{l.nu-clean}
The following relations hold for our thresholds.
 \begin{enumerate}[(a)]

  \item\label{i.cleani}
The time passed during any move is at most $\growth\B$.

  \item\label{i.success} If an attack
has a $1$-good reduced body then it succeeds.

 \end{enumerate}
 \end{lemma}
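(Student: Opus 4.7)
For part (a), the plan is to bookkeep the three summands of $\tau(\chi)=\cost_1\mu(\U)-\cost_2\n\B+\tau_{\gc}(\chi)$ for a d-history $\chi$ consisting of a single move. The allowed-move constraint that the body carries weight at most $3\B$ gives $\cost_1\mu(\U)\le 3\cost_1\B$. The bonus term $-\cost_2\n\B$ is nonpositive and can be dropped. For the geometric component, I would inspect Definition~\ref{d.move-types}: each non-turn move has a template spanning at most five cells along its principal direction, so $|\tau_{\gc}|\le 5\B$; a turn is assigned $\tau_{\gc}=8\B$ by fiat. Thus $\tau(\chi)\le(8+3\cost_1)\B$, and since $\growth=2(6+3\cost_1)=12+6\cost_1>8+3\cost_1$ by~\eqref{e.growth}, we obtain $\tau(\chi)\le\growth\B$. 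Because the path of a one-move d-history is trivially simple, the devil's temporal restriction (Definition~\ref{d.time-bound}) then forces $\t_{\r+1}-\t_\r\le\growth\B$.

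For part (b), I plan to argue by contradiction using part (a) and the devil's failure restriction. Suppose a continuing attack with a $1$-good reduced body fails. The devil's spatial restriction forces $\ul\M$ to be bad at the end time, so $\mu_{\t_{\r+1}}(\ul\M)\ge\B$. The $1$-goodness hypothesis gives $\mu_{\t_\r}(\ul\M)<(1-\delta)\B$. Because the global mass grows by at most $\swell$ per unit of time, the mass on the subset $\ul\M$ increases by at most $\swell(\t_{\r+1}-\t_\r)\le\swell\growth\B$. Hence $\mu_{\t_{\r+1}}(\ul\M)<(1-\delta+\swell\growth)\B$, and by~\eqref{e.swell-ub} we have $\swell\growth<\delta/(3\nu)<\delta$, so the upper bound is strictly less than $\B$---contradicting badness. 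The same argument covers a failing new attack, with the northward run from the starting colony playing the role of $\ul\M$, since that is the set the devil's restriction~(b) forces to be bad upon failure.

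Neither part is genuinely delicate; part~(a) is a tally of constants, and part~(b) is a robustness estimate saying that the devil's bandwidth $\swell$ cannot close the $\delta$-margin within the $\growth\B$ time available for a single move. The only decisive input beyond the definitions is reading off the right inequality from~\eqref{e.swell-ub}, and the slack there is built in with ample room to spare. If there is any subtlety to flag, it is verifying that ``reduced body'' is the correct set to track for each attack type, since the reduced body is defined only for continuing attacks and escapes---hence the explicit remark about new attacks in the previous paragraph.
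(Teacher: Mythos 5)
Your proof of part~\eqref{i.success} is essentially the paper's argument, spelled out in more detail: the paper says a $1$-good run remains good ``due to $\delta > \growth\swell$,'' and you correctly derive this from $\swell < \delta/(3\nu\growth)$ in~\eqref{e.swell-ub} together with part~\eqref{i.cleani}.

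However, your proof of part~\eqref{i.cleani} has a genuine gap. You write ``$\cost_1\mu(\U)\le 3\cost_1\B$'' using the constraint that the body has weight at most $3\B$. But in Definition~\ref{d.time-bound} the measure $\mu$ appearing in $\tau(\chi)$ is the measure \emph{in the last configuration}, i.e.\ at the \emph{end} of the move, whereas the ``weight of the body at most $3\B$'' constraint is imposed when the angel \emph{chooses} the move, i.e.\ at the \emph{start}. During the move the devil adds mass, so the final body weight can be as large as $3\B + \swell x$ where $x$ is the elapsed time. This creates a circular dependence --- the time bound $\tau(\chi)$ depends on the elapsed time $x$, which is what $\tau(\chi)$ is supposed to bound --- and your argument does not resolve it. The paper resolves it by writing the implicit inequality
\begin{align*}
x \le \cost_1\bigl(3\B + \swell x\bigr) + 6\B,
\end{align*}
then solving for $x$ to get $x \le \B(6+3\cost_1)/(1-\cost_1\swell) \le 2\B(6+3\cost_1) = \growth\B$, where the factor $2$ uses $\swell < 1/(2\cost_1)$ from~\eqref{e.swell-ub}. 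Without this step your conclusion $\tau(\chi)\le\growth\B$ does not follow. Note also that after fixing the circularity your geometric-cost bound of $8\B$ (for turns) would yield $x \le 2(8+3\cost_1)\B = (16+6\cost_1)\B$, which exceeds $\growth\B=(12+6\cost_1)\B$; the paper uses the bound $\tau_{\gc}(\chi)<6\B$ in this computation, which closes exactly. The slack you believed was ``built in with ample room to spare'' is in fact tight, which is why the implicit inequality cannot be skipped.
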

 \begin{proof}
Let us prove~\eqref{i.cleani}.
Suppose that the move takes time $x$ and has body $\M$.
Let $\mu_{0}$ be the measure before the move and $\mu_{1}$ after it.
By the requirements,  $\mu_{0}(\M) < 3\B$.
Note that $\tau_{\gc}(\chi)<6\B$ for the history $\chi$
consisting of the single move in question.
Now therefore we have
 \begin{align*}
     x &\le \cost_{1}\mu_{1}(\M) + \tau_{\gc}(\chi)
       \le \cost_{1}(\mu_{0}(\M) + \swell x) + 6\B
       \le \cost_{1}(3\B + \swell x) + 6\B,
\\   x &\le \frac{\B(6 + 3\cost_{1})}{1-\cost_{1}\swell}
                       \le 2\B(6+3\cost_{1}) = \B\growth,
 \end{align*}
using~\eqref{e.swell-ub} and~\eqref{e.growth}. 

To prove~\eqref{i.success} note that a $1$-good run is still good after
the move due to $\delta > \growth\swell$, and the attack could fail only
if this run became bad.
 \end{proof}

A good enough run is clean, as the lemma below shows.

 \begin{lemma}\label{l.good-clean}
If a run is $(-\Q)$-good then it is $1$-unimodal.
Consequently, if it is $(i+1)$-good then it is $i$-clean for 
$0\le i\le 1$. 
 \end{lemma}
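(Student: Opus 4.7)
The plan is to establish the first statement by a direct counting argument that exploits the defining property of the obstacle (maximum weight), and then to derive the second statement from the first via routine monotonicity observations.

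For the first statement, I would argue by contradiction. Suppose the $(-\Q)$-good run $\U = \U_1 \cup \dots \cup \U_n$ is not $1$-unimodal, and let $\U_k$ be its obstacle. Then on one side of the obstacle there must be consecutive colonies $\U_j, \U_{j+1}$ (so $k \notin \{j,j+1\}$) that fail to form a $1$-safe pair, i.e.\ $\mu(\U_j \cup \U_{j+1}) \ge (\safe - \delta)|\U_1|$. The key observation is that since $\U_k$ has the maximum single-colony weight in $\U$, we have $\mu(\U_k) \ge \max\{\mu(\U_j), \mu(\U_{j+1})\} \ge \mu(\U_j \cup \U_{j+1})/2$. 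Since $\U_k$ is disjoint from $\U_j \cup \U_{j+1}$, summing gives $\mu(\U) \ge (3/2)(\safe - \delta)|\U_1|$.

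To reach a contradiction with $(-\Q)$-goodness, which asserts $\mu(\U) < (1 + \Q\delta)|\U_1|$, it suffices to verify $(3/2)(\safe - \delta) \ge 1 + \Q\delta$; this rearranges to $\delta \le (3\safe - 2)/(2\Q + 3)$. The constraint $\delta < (\safe - 2/3)/\Q = (3\safe - 2)/(3\Q)$ from~\eqref{e.delta-ub}, combined with $\Q \ge 3$ (immediate from~\eqref{e.Q-lb} since $\kappa \ge 12$ and $\safe < 1$), gives $3\Q \ge 2\Q + 3$ and hence exactly what we need. This little arithmetic verification is the only mildly delicate step in the whole argument.

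For the second statement, suppose $\U$ is $(i+1)$-good with $0 \le i \le 1$. Then $\mu(\U) < (1 - (i+1)\delta)|\U_1| < (1 + \Q\delta)|\U_1|$, so $\U$ is $(-\Q)$-good and therefore $1$-unimodal by the first part. Since $\safe - \delta \le \safe - i\delta$ for $i \le 1$, any $1$-step-clean run is a fortiori $i$-step-clean, so $\U$ is $i$-unimodal. Finally, any sub-run $\V \subseteq \U$ of three consecutive colonies satisfies $\mu(\V) \le \mu(\U) < (1 - (i+1)\delta)|\V_1|$, so $\V$ is $(i+1)$-good, providing the remaining ingredient of $i$-cleanness.
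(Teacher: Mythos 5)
Your argument is correct and is essentially the paper's own proof: both exploit that the obstacle has maximal weight among colonies, so a pair of non-obstacle colonies contributes at most $2/3$ of a three-colony sum, and then the parameter constraint \eqref{e.delta-ub} together with $\Q\ge 3$ closes the arithmetic. You merely recast the paper's direct chain of inequalities as a contradiction and spell out the ``consequently'' step (a sub-run inherits the goodness bound, and $1$-unimodal implies $i$-unimodal for $i\le 1$), which the paper leaves implicit.
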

 \begin{proof}
Let $a$ be the weight of the obstacle and $b,c$ two weights
of non-obstacle colonies.
Then we have
 \begin{align*}
  b+c  \le (2/3)(a+b+c) \le (2/3)(1+\Q\delta)\B < (\safe-\delta)\B
 \end{align*}
by~\eqref{e.delta-ub}.
The ``consequently'' part follows immediately from the definition of
$i$-cleanness.
 \end{proof}

 \begin{lemma}\label{l.good}
Suppose that for rectangle $\U$ consisting of $\Q$ horizontal runs
of colonies below each other, we have 
$\mu(\U) \le \Q\B(\safe+\delta)+\B$.
Then at least $\kappa$ of the horizontal runs are $1$-clean.

In particular, if a colony of $\bbG^{*}$ is $(-1)$-safe then at least
$\kappa$ of its rows are $1$-clean.
 \end{lemma}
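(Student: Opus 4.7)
The plan is to count rows via a density argument and invoke Lemma~\ref{l.good-clean} to convert small row mass into $1$-cleanness. First, I would record the simple reduction: any row $R_i$ of $\U$ with $\mu(R_i) < (1-2\delta)\B$ is automatically $1$-clean. Indeed, such a row is $2$-good, and since $(1-2\delta)\B \le (1+\Q\delta)\B$ it is also $(-\Q)$-good; by the first sentence of Lemma~\ref{l.good-clean} it is then $1$-unimodal, and every 3-subrun has mass at most $\mu(R_i) < (1-2\delta)\B$ and so is itself $2$-good. Both conditions of $1$-cleanness are met.

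Next, letting $m_1,\dots,m_\Q$ be the masses of the $\Q$ horizontal rows of $\U$ and $r$ the number of indices with $m_i \ge (1-2\delta)\B$, a pigeonhole computation gives
\[
   r(1-2\delta)\B \le \sum_{i=1}^{\Q} m_i \le \Q\B(\safe+\delta) + \B,
\]
so $r \le (\Q(\safe+\delta)+1)/(1-2\delta)$. The number of $1$-clean rows is therefore at least $\Q - r$, and it remains to verify $\Q - r \ge \kappa$.

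The main obstacle is precisely this last inequality, which rearranges to $\Q(1-\safe-3\delta) \ge \kappa(1-2\delta) + 1$. The hypothesis $\delta < (1-\safe)/6$ from~\eqref{e.delta-ub} yields $1-\safe-3\delta > (1-\safe)/2$, and combined with~\eqref{e.Q-lb} this makes the left-hand side exceed $\Q(1-\safe)/2 > \kappa$. The residual ``$+1$'' on the right gets absorbed using the integer slack in the strict inequality $\Q > 2\kappa/(1-\safe)$ together with the saving $2\delta\kappa$ (non-negligible thanks to $\kappa \ge 12$); this is the one step where one has to be careful rather than simply loose with the parameter inequalities.

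For the second statement, a $(-1)$-safe colony $\U^*$ of $\bbG^*$ satisfies $\mu(\U^*) < (\safe+\delta)\B^* = \Q\B(\safe+\delta) \le \Q\B(\safe+\delta)+\B$, so the first part applies to its rectangle of $\Q$ rows of cells and yields the claimed $\kappa$ clean rows.
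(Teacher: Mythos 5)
Your overall strategy is the same as the paper's: count the rows whose mass is below $(1-2\delta)\B$ and use Lemma~\ref{l.good-clean} to turn this into $1$-cleanness, then a mass-counting bound gives the lower bound $\kappa$. The first paragraph and the pigeonhole step are fine. However, there is a genuine gap in the final parameter inequality.

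By dropping down to a real-valued upper bound on $r$ and then insisting on $(\Q(\safe+\delta)+1)/(1-2\delta)\le \Q-\kappa$, you have strengthened the target by an additive $1-2\delta$ compared to what is actually needed. Because $r$ and $\Q-\kappa$ are integers, it would suffice to show the strict inequality $(\Q(\safe+\delta)+1)/(1-2\delta)<\Q-\kappa+1$, which rearranges to $\Q(1-\safe-3\delta)>\kappa(1-2\delta)+2\delta$, whereas you aim for $\Q(1-\safe-3\delta)\ge\kappa(1-2\delta)+1$. This extra $1-2\delta$ matters: the two justifications you invoke --- the integer slack in $\Q>2\kappa/(1-\safe)$ and the ``saving'' $2\delta\kappa$ --- can each be arbitrarily small under the two constraints you cite, and do not close the gap. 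Concretely, with $\kappa=12$, $\safe=0.8$, $\Q=121$, $\delta=0.0333$, one checks $\delta<(1-\safe)/6$ and $\Q>2\kappa/(1-\safe)$, yet $\Q(1-\safe-3\delta)\approx 12.11 < 12.20\approx\kappa(1-2\delta)+1$. (That parameter choice is of course excluded by the second clause of~\eqref{e.delta-ub}, namely $\delta<(\safe-2/3)/\Q$, which forces $3\Q\delta<3\safe-2<1$ and would rescue your target; but you never invoke it.)

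The paper's proof avoids this entirely by arguing by contradiction and then dividing by $\Q$: assuming fewer than $\kappa$ good rows, one gets $1-\safe-\kappa/\Q\le\delta(3-2(\kappa-1)/\Q)$, and the chain $1-\safe-\kappa/\Q>(1-\safe)/2>3\delta>\delta(3-2(\kappa-1)/\Q)$ is immediate from $\Q>2\kappa/(1-\safe)$ and $\delta<(1-\safe)/6$, with no cancellation bookkeeping. (There is a sign typo in the printed RHS of the paper's second display --- it should read $\delta(3-2(\kappa-1)/\Q)$ --- but this is harmless.) To repair your version, either exploit the integrality of $r$ and prove the weaker strict bound, or explicitly cite $\delta<(\safe-2/3)/\Q$ to bound $3\Q\delta$ below $1$.
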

 \begin{proof}
Suppose that $\U$ does not have $\kappa$ rows that are $2$-good (and thus
$1$-clean).
Then
 \begin{align*}
   (\Q-(\kappa-1))\B(1-2\delta) &< \mu(\U) < \Q\B(\safe+\delta)+\B,
%   (1-(\kappa-1)/\Q)(1-2\delta) &< \mu(\U) < \safe+\delta+1/\Q,
\\  1-\safe -\kappa/\Q            &< \delta(3+2(\kappa-1)/\Q),
\\                    (1-\safe)/2 &< \delta(4-\safe)
&\txt{by~\eqref{e.Q-lb}},
 \end{align*}
contradicting~\eqref{e.delta-ub}.
 \end{proof}

\section{The implementation map}

This section proves the main lemma.

Let $\mu_{0}$ be the measure at the beginning of the big move,
and $\p_{0}$ the initial point.
Unless saying otherwise, the properties of parts of $\M^{*}$ are
understood with respect to $\mu_{0}$.
We will make most decisions based on the measure $\mu_{0}$.
Our map will implement each big move using at most $\nu$ small moves, where
$\nu$ was defined in~\eqref{e.nu-kappa}.
Due to Lemma~\ref{l.nu-clean} and
relation~\eqref{e.swell-ub} this will imply that, for example,
a run required to be safe with respect to $\mu_{0}$
will still be $(-1)$-safe at the end.
Let us make this statement explicit:
 
 \begin{lemma}
In any sequence of $\le\nu$ moves, the total mass increases by less than
$\delta\B$.
 \end{lemma}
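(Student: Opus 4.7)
The plan is a short direct calculation chaining together the bound on the duration of a single move with the devil's mass-deposit rate, and then invoking the parameter constraint on $\swell$.

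First I would fix the sequence of at most $\nu$ moves and let $x_1,\dots,x_m$ ($m\le\nu$) denote their durations. By Lemma~\ref{l.nu-clean}\eqref{i.cleani} each $x_i\le\growth\B$, so the total elapsed time is at most $\nu\growth\B$. Since the devil's temporal constraint forces $\mu_{t+1}(\bbZ^{2})-\mu_{t}(\bbZ^{2})\le\swell$ per unit of time, the total mass added over this whole interval is bounded by $\swell\cdot\nu\growth\B$.

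Finally I would apply inequality~\eqref{e.swell-ub}, which gives $\swell<\delta/(3\nu\growth)$, hence $\swell\cdot\nu\growth\B<\delta\B/3<\delta\B$, which is the desired conclusion.

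There is no real obstacle here; the lemma is just a bookkeeping corollary of Lemma~\ref{l.nu-clean}\eqref{i.cleani} together with the choice of $\swell$ in~\eqref{e.swell-ub}. The only thing to keep in mind is that Lemma~\ref{l.nu-clean}\eqref{i.cleani} itself was proved using~\eqref{e.swell-ub}, so the dependency chain is: the bound $\swell<1/(2\cost_{1})$ yields $x_i\le\growth\B$ per move, and then the bound $\swell<\delta/(3\nu\growth)$ converts the aggregate time $\le\nu\growth\B$ into an aggregate mass gain $<\delta\B$.
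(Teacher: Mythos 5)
Your proof is correct and matches the paper's (unstated) intent exactly: the paper itself introduces this lemma with the remark that it follows from Lemma~\ref{l.nu-clean}\eqref{i.cleani} and relation~\eqref{e.swell-ub}, and you have simply written out that chain — each move lasts at most $\growth\B$, so $\le\nu$ moves last at most $\nu\growth\B$, so the devil (who adds at most $\swell$ per unit time) deposits at most $\swell\nu\growth\B < \delta\B/3 < \delta\B$. Your closing observation about the dependency order (the $1/(2\cost_1)$ bound feeding into the per-move time bound, and the $\delta/(3\nu\growth)$ bound feeding into the aggregate mass bound) is also accurate and a useful clarification.
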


This extra tolerance in the initial requirements insures
that any planned steps, jumps and turns remain executable by the time we
actually arrive at the point of executing them.
With attacks this is not the case, they can fail or we may find
immediately before executing an attack that it is
not executable anymore since the reduced body is not good anymore.

 \begin{definition}
In an implementation, 
colonies of $\bbG^{*}$ will be called \df{large} colonies, and colonies of
$\bbG$ \df{small} colonies, or \df{cells}.
Moves in the game $\bbG^{*}$ will be called \df{big} moves, and moves in
the game $\bbG$ \df{small} moves, or simply \df{moves}.
A big move has starting colony and body $\S^{*}$, $\M^{*}$.
When it has a destination colony that will be denoted by $\D^{*}$.
 \end{definition}

\subsection{Plan for estimating the time}

Let
 \begin{align*}
 \chi=(\dmove_{1},\amove_{1},\dots,\dmove_{\m+1})
 \end{align*}
be a history of $\bbG^{*}$.
As shown in Subsection~\ref{ss.scale-up},
in the implementation there corresponds to $\chi$ a history
$\chi_{*}$ of game $\bbG$, which shares the initial and final
essential configurations of $\chi$.
Segment $(\dmove_{\r},\amove_{\r},\dmove_{\r+1})$ of $\chi$
corresponds to segment
 \begin{align*}
   (\dmove_{*\s_{\r}},\amove_{*\s_{\r}},\dmove_{*(\s_{\r}+1)},\dots,
   \amove_{*\s_{\r+1}-1},\dmove_{*\s_{\r+1}})
 \end{align*}
of $\chi_{*}$.
If $\M^{*}_{\r}$ is the body of located move $\amove_{\r}$ and
$\M_{i}$ is the body of located move $\amove_{*i}$
then our implementation will give
$\M_{i}\subset\M^{*}_{\r}$ for all $\s_{\r}\le i \le \s_{\r+1}-1$.
So the body of the path of the implementation of each move is in the body
of the implemented move.

If the path $\amove(\chi)$ of history $\chi$ is simple then
we will implement it via a simple path $\amove(\chi_{*})$.
(This goal accounts for some of the complexity of the implementation.)

Since $\bbG$ is an AD-game, we can estimate the time of path $\chi_{*}$ by
the time bound introduced in Definition~\ref{d.time-bound}.
We will then show that this estimate obeys the time bound required by game
$\bbG^{*}$.
Let $\mu$ be the measure in the last configuration, let
$\U$ and $\U_{*}$ be the union of the bodies of all located moves in the
path $\amove(\chi)$ and $\amove(\chi_{*})$ respectively.
Let $\n$ be the number of of failed continuing attacks in $\chi$ and
$\n_{*}$ be the number of of failed attacks in $\chi_{*}$.
Then by the time bound of game $\bbG$ we have
 \begin{align*}
 \tau(\chi_{*}) = \cost_{1}\mu(\U_{*}) - \cost_{2}\n_{*}\B +
\tau_{\gc}(\chi_{*}).
 \end{align*}
Our goal is to show that this expression is bounded above by
 \begin{align*}
   \tau(\chi) = \cost_{1}\mu(\U) - \cost_{2}\n\Q\B + \tau_{\gc}(\chi).
 \end{align*}
Ignoring the negative terms first, as we noted $\U_{*}\subset\U$,
so of course we have $\mu(\U_{*})\le \mu(\U)$.
But $\tau_{\gc}(\chi_{*})$ will typically be larger than
$\tau_{\gc}(\chi)$.

If $\chi=\chi_{1}+\dots+\chi_{\m}$ where $\chi_{\m}$ are unit histories
and $\chi_{*i}$ is the segment of $\chi_{*}$ corresponding to $\chi_{i}$
then $\tau_{\gc}(\chi)=\sum_{i}\tau_{\gc}(\chi_{i})$,
and $\tau_{\gc}(\chi_{*})=\sum_{i}\tau_{\gc}(\chi_{*i})$.
Trying to bound each $\tau_{\gc}(\chi_{*i})$ by the
geometric cost $\tau_{\gc}(\chi_{i})$ of the big move from which it was
``translated'', we will frequently have
$\tau_{\gc}(\chi_{*i})>\tau_{\gc}(\chi_{i})$.
Let us call the difference
$\tau_{\gc}(\chi_{*i})-\tau_{\gc}(\chi_{i})$ the \df{extra geometric cost}.
The basic strategy in the implementation is to ``charge'' the extra
geometric cost to 
the weight of some sets in the difference $\U\setminus\U_{*}$.
This suffices for the translation of a big step.

Unfortunately in the implementation 
of the other moves $\chi_{i}$, there may not be enough mass outside
$\U_{*}$.
We will compensate the geometric cost 
by the negative contribution $\cost_{2}\B$ of some failed continuing
attacks in the implementation.
Of course if $\chi_{i}$ is a failed continuing attack itself then this
cannot be done, fortunately then it need not be.

 \begin{definition}
The value $\cost_{2}\B$ will be called the \df{profit} of any failed
continuing attack.
 \end{definition}

\subsection{General properties}

 \begin{definition}
Let $\U=\U_{1}\cup\dots\cup\U_{\n}$ be a vertical run of colonies.
We will say that $\U_{i}$ is \df{secure} in $\U$ provided
$\U_{i-1}\cup\U_{i}$ is safe (if $i>1$) and
$\U_{i}\cup\U_{i+1}$ is safe (if $i<\n$).

We will say that a horizontal and vertical run intersect 
\df{securely} if the intersection colony is secure either in
the horizontal run or in the vertical run.
 \end{definition}

\begin{lemma}
Let $R$ be a clean row in a rectangle.
Then there are at most 3 clean columns that do not intersect $R$ securely.
\end{lemma}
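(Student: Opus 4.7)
The plan is to show that the only column positions at which a clean column can fail to meet $R$ securely are at most three positions, namely those at or adjacent to the obstacle of $R$.

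First I would set up notation: write $R = \U_{1}\cup\dots\cup\U_{\n}$ as a horizontal run of colonies and let $\U_{k}$ be its obstacle. By the definition of \emph{clean}, $R$ is unimodal, so the two sub-runs $(\U_{1},\dots,\U_{k-1})$ and $(\U_{k+1},\dots,\U_{\n})$ are each step-clean, i.e.\ every two consecutive colonies in each sub-run form a safe pair.

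Next I would observe the following key fact. For an index $j\notin\{k-1,k,k+1\}$ (and away from the endpoints, with boundary cases being only easier), both pairs $\U_{j-1}\cup\U_{j}$ and $\U_{j}\cup\U_{j+1}$ lie entirely inside one of the two step-clean sub-runs, hence both are safe. By the definition of \emph{secure in $\U$}, this makes $\U_{j}$ secure inside the row $R$. Consequently, any column $C$ that meets $R$ at such a position $j$ meets $R$ securely, regardless of whether $\U_{j}$ is itself secure in $C$.

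Therefore, the only positions $j$ at which a clean column could possibly fail to intersect $R$ securely are $j\in\{k-1,k,k+1\}$; at most one column passes through each position, yielding at most three such columns. No calculation is required beyond unpacking the definitions, so I do not expect a genuine obstacle here; the one thing to be slightly careful about is boundary positions ($k=1$ or $k=\n$), but these only reduce the count of insecure positions and so do not affect the bound.
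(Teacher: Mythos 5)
Your proof is correct and follows exactly the same approach as the paper's (which is a one-line version of your argument): a clean row is unimodal, so its step-clean sub-runs on either side of the obstacle guarantee that every position other than the obstacle and its two neighbors is secure in $R$, hence every column there intersects $R$ securely. The only extra content in your write-up is spelling out the definitional unpacking and boundary cases, which the paper leaves implicit.
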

 \begin{proof}
Indeed any clean column that intersects $R$ in a position different from
the obstacle and its neighbors intersects $R$ securely.
 \end{proof}

\begin{remark}
In the procedure below, when
a row and a column intersect securely we will sometimes say that we 
first walk in the row and then continue walking in the column.
But it is understood that if the move before the
intersection is a step and the one after the intersection is a 
jump then these two moves are actually replaced by a single turn move.
\end{remark}

The destination colony of a big move is $(-1)$-safe and therefore
due to Lemma~\ref{l.good} has at least $\kappa$ rows that are $1$-clean.

 \begin{definition}\label{d.R''(i)}
Let $C(0)$ be the starting column of the angel.

When starting from a northward-clear point in $\S^{*}$,
we will denote by $R'(0)$ the row to which it is possible to step
north.
For $i\ge 1$ let $R'(i)$ denote the $(i+1)$th clean 
row of $\S^{*}$ starting from the south.

Let $R''(i)$ denote the $i$th clean row of $\D^{*}$ starting from the south
with the additional property $R''(i)>1$.
 \end{definition}

In the implementation of a big step, jump or new attack, ideally we would
just walk in column $C(0)$ to a cell below row $R''(1)$ from which
it is reachable in one step.
We will do something else only if this is not possible.

\begin{definition}
Certain runs of cells in the body of each implemented big move will be
called \df{scapegoat runs}.
Consider a history $\chi$ of $\bbG^{*}$ and its implementation
$\chi_{*}$ in $\bbG$.
We will make sure that
all scapegoat runs will be disjoint of each other as well as of the
body of $\chi_{*}$.

Certain subhistories will be called \df{digressions}.
Each digression will be \df{charged} to some scapegoat run,
and different digressions will be charged to different scapegoat runs.
\end{definition}

 \begin{definition}
We will say that row $R$ 
is \df{securely reachable} from a cell $\U$ below it
if the upward vertical 
run from $\U$ to the last cell $\U'$ 
below $R$ is clean and the step from $\U'$ to $R$ is safe.

Let us call the \df{blameable run of} $\U,R$ 
the vertical run starting from the cell above $\U$ and ending in $R$.
 \end{definition}

Let us lowerbound the weight of a blameable run.

 \begin{lemma}\label{l.blame}
Suppose that $R$ is not securely reachable from $\U$.
Let $\V$ be the blameable run of $\U,R$, and $\mu$ the current measure.
Then we have
 \begin{align*}
 \mu(\V) \ge 0.5 \B(1-\safe).
 \end{align*}
 \end{lemma}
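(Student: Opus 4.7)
The plan is a case analysis on the reason why $R$ fails to be securely reachable from $\U$. By the definition of secure reachability, the failure must come from one of two sources: either the final step from $\U'$ to $R$ is unsafe, so $\mu(\U' \cup R) \ge \safe\B$; or the upward vertical run from $\U$ to $\U'$, written $\U = \U_1, \U_2, \dots, \U_n = \U'$, is not $0$-clean.

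In the first case, both $\U'$ and $R$ belong to $\V$ (this implicitly uses $\U' \neq \U$, which holds in every context where the lemma is applied), so $\mu(\V) \ge \safe\B$; since the parameter constraints force $\safe > 2/3$, this already dwarfs $0.5\B(1-\safe)$.

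In the second case, cleanness fails in one of two ways: either the run is not $0$-unimodal (so there is a run of two consecutive cells on a single side of the obstacle $\U_o$ of total mass $\ge \safe\B$), or some three consecutive cells carry total mass $\ge (1-\delta)\B$. Whenever the offending sub-run is contained in $\U_2,\dots,\U_n$, it sits inside $\V$, so we obtain $\mu(\V) \ge \safe\B$ or $\mu(\V) \ge (1-\delta)\B$, both comfortably above the required bound.

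The genuinely delicate case is when the bad sub-run begins at $\U$, which is the only cell of the upward run lying outside $\V$. For a bad $2$-run $\U \cup \U_2$ on one side of the obstacle, the obstacle must have index $o \ge 3$, so $\U_o \in \V$; the maximality of $\U_o$ yields $2\mu(\U_o) \ge \mu(\U) + \mu(\U_2) \ge \safe\B$, hence $\mu(\V) \ge \safe\B/2$, which exceeds $0.5\B(1-\safe)$ because $\safe > 1-\safe$. For a bad $3$-run $\U \cup \U_2 \cup \U_3$, I would use that $\U$ is itself $(-1)$-safe in every context this lemma is applied (via the clear-point hypothesis built into $\cK_{\start}$), so $\mu(\U) < (\safe + \delta)\B$ and therefore $\mu(\U_2 \cup \U_3) \ge (1-\delta)\B - (\safe + \delta)\B = (1 - \safe - 2\delta)\B$; by the constraint $\delta < (1-\safe)/6$ this strictly exceeds $0.5\B(1-\safe)$. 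The main obstacle is precisely this last sub-case, since the naive ``the bad mass lives inside $\V$'' argument no longer applies and one must route the mass either through the obstacle of the run or through the $(-1)$-safeness of the starting cell.
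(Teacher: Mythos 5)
Your proof is correct, but it takes a genuinely different and more elementary route than the paper. The paper's argument is much shorter: after dispatching the case where the final step $\U'\to\U''$ is unsafe, it invokes Lemma~\ref{l.good-clean} (``$1$-good implies clean'') in contrapositive form to conclude that the full run $\V'$ from $\U$ to $\U'$ is not $1$-good, hence $\mu(\V')\ge(1-\delta)\B$; it then subtracts $\mu(\U)\le(\safe+\delta)\B$ once and is done. You instead decompose the failure of cleanness into its two constituent conditions (not $0$-unimodal, or some $3$-run not $1$-good), and then treat separately the offending sub-run according to whether it lies wholly inside $\V$ or begins at $\U$. Your resolution of the ``bad $2$-run through $\U$'' sub-case via obstacle maximality ($2\mu(\U_o)\ge\mu(\U)+\mu(\U_2)\ge\safe\B$, with $\U_o\in\V$ since $o\ge3$) is a nice self-contained trick the paper does not need, and your ``bad $3$-run through $\U$'' sub-case reproduces precisely the paper's subtraction. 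In exchange, the paper's single application of Lemma~\ref{l.good-clean} buys a one-line treatment of everything your case analysis covers. Both proofs rely on the same two unstated hypotheses: that the angel's current cell $\U$ is $(-1)$-safe, and that $\U'\ne\U$ (so that $\U'\cup\U''\subset\V$ in the unsafe-step case); you make the latter explicit, which is a helpful clarification the paper leaves implicit.
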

 \begin{proof}
If the step with body $\U'\cup\U''$ 
from cell $\U'$ below $R$ to cell $\U''$ in $R$
is not safe then $\mu(\U'\cup\U'')\ge \safe\B$. 
Suppose it is safe.
Since the run $\V'$ from $\U$ to $\U'$ is not clean,
it follows from Lemma~\ref{l.good-clean} that it is not $1$-good.
Using the fact that the angel's current position
is in a $(-1)$-safe cell,
 \begin{align}\label{e.blame}
         \mu(\V') &\ge \B(1-\delta),
\\        \mu(\U) &\le \B(\safe+\delta).
\\ \mu(\V'\setminus\U) &\ge \B(1-\safe-2\delta) \ge 0.5\B(1-\safe)
%\\ 1-\safe-4\delta \ge 0
 \end{align}
due to~\eqref{e.delta-ub}.
 \end{proof}

So a blameable run has weight $\ge 0.5 \B(1-\safe)$.
If there is extra geometric cost (at most $\c\Q\B$ for some constant $c$),
then the inequalities in Subsection~\ref{ss.main-lemma} show that
we will be able to charge it all against $\rho_{1}$ times
this weight, the weight of an appropriate scapegoat run.

(This is crude, a factor of $\Q$ is lost here unnecessarily.)

\subsection{Big step}

Let us define now the translation of a big northward step.

\begin{enumerate}[1.]

 \item\label{i.step.in-clean-row}
Let us call the starting cell $\U$.
If some $R''(i)$ is securely reachable from $R'(0)\cap C(0)$
then let $R''(i''_{0})$ be the first such (note that $R''(i''_{0})>1$).
We will walk to the colony below $R''(i''_{0})$ in $C(0)$.
Row $R''(i''_{0})$ will still be clean, so we will be done.

Suppose now that $R''(1)$ is not securely
reachable from $R'(0)\cap C(0)$.
Since the body of the big step is $(-1)$-safe, 
Lemma~\ref{l.good} implies that it has at least
$\kappa$ columns that are clean.

Let $C(1)$ be one that is securely reachable from $R'(0)\cap C(0)$.
Let  $i''_{1}$ be the first $i>1$ 
such that $R''(i)$ is securely reachable from $R'(0)\cap C(1)$.
We step up to $R'(0)$, then walk to
$R'(0)\cap C(1)$ and then to the cell below $R''(i''_{1})\cap C(1)$.
The scapegoat is the blameable run of $R'(0)\cap C(0)$, $R''(1)$.

 \begin{remark}
We choose $i>1$ in order to be above $R''(1)$, since a cell of $R''(1)$ may
be part of a scapegoat run, and the clean row under which we will end up
should be disjoint from the scapegoat run, in order to be avoid
intersecting the scapegoat run in the implementation of the next big move.
In what follows we choose larger and larger $i''$ values for similar reasons.
 \end{remark}

The total geometric cost is at most $\B(16 + \Q + 2\Q)\le 4\Q\B$
(with two turns and two straight runs), 
and the total number of moves is at most $3\Q$.

 \item\label{i.step.in-clean-column} 
Suppose that the starting point is not northward clear,
but it is eastward clean, let $C(1)$ be the clean
column one step to the right of the current cell $\U$.

Again, if some $R''(i)$ is securely reachable from $\U$
then let $R''(i''_{0})$ be the first such.
We will walk up to the colony below $R''(i''_{0})$.

 \begin {figure}
\begin {equation*}
\includegraphics{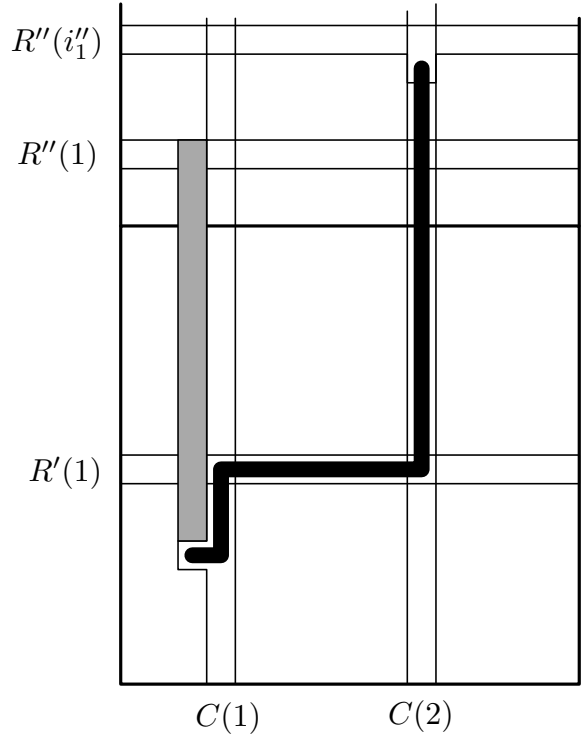}
\end {equation*}
\caption{Implementing a northward step from an eastward-clear point.
The grey rectangle is the scapegoat.
 \label{f.step}}
 \end {figure}

If we do not have this case (see Figure~\ref{f.step}) then
since the body of the big step is $(-1)$-safe, again there is
a clean column $C(2)$ of $\M^{*}$, to the east of $C(1)$.
Let $i''_{1}$ be the first $i>1$ such that $R''(i)$ securely
intersects $C(2)$.
There is a clean row $R'(1)$ of $\S^{*}$
securely intersecting both $C(1)$ and $C(0)$.
We will step into $C(1)$, then walk to $R'(1)\cap C(1)$,
then to $R'(1)\cap C(2)$, finally walk 
in $C(2)$ to the cell below $R''(i''_{1})$.
The scapegoat is the blameable run of $\U,R''(1)$.

The total geometric cost is at most $\B(24 + \Q + \Q + 2\Q)\le 5\Q\B$, 
adding up the cost
of at most three turns and three runs, and the total number of moves is at
most $4\Q$.
 \end{enumerate}

 \subsection{Big jump}

Consider a big northward jump, starting from a northward clear point.
There is a clean row $R'(0)$ to which it is possible to step north
from the current cell $\U$.
We will use Definition~\ref{d.R''(i)} again.

\subsubsection{If a clean column exists}

Suppose that $\M^{*}$ has a clean column $C(1)$.
Then we will proceed somewhat as in part~\ref{i.step.in-clean-row}
of the implementation of a big step, with possibly yet another 
digression.

If some $R''(i)$ is securely reachable from $R'(0)\cap C(0)$
then let $R''(i''_{0})$ be the first such.
We will walk to the colony below $R''(i''_{0})$ in $C(0)$.
Row $R''(i''_{0})$ will still be clean, so we will be done.

Suppose now that $R''(1)$ is not securely
reachable from $R'(0)\cap C(0)$.
If $C(1)$ was securely reachable from $R'(1)\cap C(0)$ then we could proceed
just as in the implementation of a big step, but this is not guaranteed
now, so suppose it is not so.
Let $R'(i'_{1})$ be the lowest clean row of $\S^{*}$ above
$R'(0)$ securely intersecting $C(1)$ and let $C(2)$ be a column 
whose intersection colony is secure both with $R'(0)$ and $R'(i'_{1})$.
We step up to $R'(0)$, then walk in $R'(0)$ to $C(2)$, further
in $C(2)$ to $R'(i_{1})$.

If some $R''(i)$ for $i>1$ is securely reachable from $R'(i'_{1})\cap C(2)$
then let $R''(i''_{2})$ be the first such.
We will walk to the colony below $R''(i''_{2})$ in $C(2)$,
charging everything to the scapegoat run of $\U,\R''(1)$.
Otherwise let  $i''_{2}$ be the first $i>1$ 
such that $R''(i)$ is securely reachable from $R'(i'_{1})\cap C(1)$.
We walk in $R'(i'_{1})$ to $C(1)$,
and then $C(1)$ to the cell below $R''(i''_{2})$.
The whole operation will be charged to the scapegoat run of
$R''(i''_{1})\cap C(2), R''(i'_{1})$, 

The geometric cost in this worst case is at most $\B(32+3\Q+3\Q)\le 7\Q\B$,
the number of moves is at most $6\Q$.
We will be able to charge the geometric cost to a single scapegoat
run due to~\eqref{e.cost1}.

From now on we suppose that no clean column exists in $\M^{*}$.

\subsubsection{Obstacles}\label{sss.obstacles}

Let us draw some consequences of the fact that the body $\M^{*}$ is good.
(For a big jump we actually have 1-goodness, but the analysis will also be
applied to the implementation of attacks.)

 \begin{lemma}\label{l.three-cases}
If $\M^{*}$ has no $1$-good column then every column is unimodal.
 \end{lemma}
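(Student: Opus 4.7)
The plan is to combine a one-line pigeonhole with Lemma~\ref{l.good-clean}. Let $C_{1},\dots,C_{\Q}$ be the $\Q$ columns into which $\M^{*}$ is partitioned; each is a run of cells, so its first element has size $\B$. The standing hypothesis recorded at the start of Subsection~\ref{sss.obstacles} is that $\M^{*}$ is good as a run of big colonies, whence
\[
  \mu_{0}(\M^{*}) < |\M^{*}_{1}| = \B^{*} = \Q\B.
\]
(For a big jump we actually have $1/2$-goodness of $\M^{*}$, but only plain goodness is needed for the argument.)

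Now, if no $C_{i}$ is $1$-good then $\mu_{0}(C_{i}) \ge (1-\delta)\B$ for every $i$. Since the $C_{i}$ partition $\M^{*}$, fixing any $j$ and bounding $\mu_{0}(C_{j})$ by the total minus the contributions from the other $\Q-1$ columns gives
\[
  \mu_{0}(C_{j}) < \Q\B - (\Q-1)(1-\delta)\B = \B\bigl(1+(\Q-1)\delta\bigr) < (1+\Q\delta)\B,
\]
so $C_{j}$ is $(-\Q)$-good as a run of cells.

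Applying Lemma~\ref{l.good-clean} to each $C_{j}$ then yields $1$-unimodality, which is strictly stronger than the $0$-unimodality (``unimodality'') asserted in the statement, and this holds for every $j$. I do not expect any real obstacle: the interaction with $\delta$ and $\Q$ is already absorbed into Lemma~\ref{l.good-clean} via the constraints~\eqref{e.delta-ub} and~\eqref{e.Q-lb}, and the pigeonhole step merely confirms that the failure of $1$-goodness in any single column leaves every column just inside the $(-\Q)$-good tolerance to which Lemma~\ref{l.good-clean} applies.
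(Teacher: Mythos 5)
Your proof is correct and matches the paper's argument almost verbatim: both establish that under the standing hypothesis $\mu_{0}(\M^{*})<\Q\B$, failure of $1$-goodness in every column forces each column's weight below $\bigl(1+(\Q-1)\delta\bigr)\B$, hence each column is $(-\Q)$-good, and then Lemma~\ref{l.good-clean} gives $1$-unimodality. The paper phrases the pigeonhole via an ordering $w_{1}\le\dots\le w_{\Q}$ and bounds only $w_{\Q}$, but that is the same computation you carry out for each fixed $j$.
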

 \begin{proof}
Suppose that no column is $1$-good.
Let $w_{1}\le w_{2}\le\dots\le w_{\Q}$ be the weights of all the columns,
ordered, so $w_{i}\ge (1-\delta)\B$ for all $i$.
Then, 
 \begin{align*}
   \Q\B \ge w_{1}+\dots+w_{\Q} &\ge (\Q-1)\B(1 - \delta) + w_{\Q},
\\        \B(1 + (\Q-1)\delta) &\ge w_{\Q}.
 \end{align*}
Now unimodality is implied by Lemma~\ref{l.good-clean}.
 \end{proof}

So now we know that each column is unimodal.
On the other hand, 
if we still manage to pass through in a simple way then there
 plenty of ways to charge it, since all columns of $\M^{*}$ are heavy.

Suppose that for some $i\ge 1$ there is a
column $C(1)$ of $\M^{*}$ whose obstacle is below $R'(i)$.
It is easy to perform an implementation that gets us to $R'(i)$ via some
column $C(2)$ that is clean in $\S^{*}$ and intersects $R'(0)$ and $R'(i)$
securely, then from there to $C(1)$, and finally walks north on $C(1)$ to an
appropriate row $R''(i'')$.
Charging is done like in earlier similar cases
(but as mentioned above is not a problem anyway).
The extra geometric cost and the number of moves have also the same bounds.
The case remains that no obstacles are below any $R'(i)$.

Suppose that there is some obstacle above $R''(1)$, in some column $C(1)$.
Then we can walk to $C(1)$ and then in $C(1)$ to below 
$R''(1)$, using again possibly an intermediate
row $R'(i')$ and column $C(2)$.
Charging and bounding the extra
geometric cost and number of moves is done as before.
The case remains that there are no obstacles below 
$R'(i)$ for any $i$ and above $R''(1)$.

Let us make another observation about obstacles.
 \begin {figure}
\begin {equation*}
\includegraphics{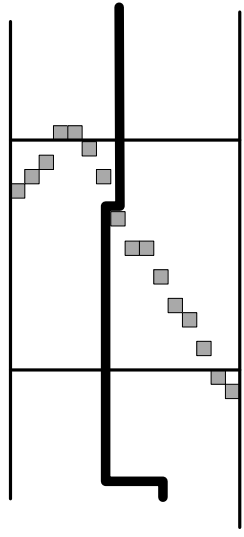}
\end {equation*}
\caption {The small grey squares are obstacles.
The path passes between two neighbor obstacles that are not close in height.
\label{f.obstacles}}
 \end {figure}

  \begin{lemma}\label{l.distant-obstacles}
Suppose that there are $i,j$ such that $r_{j},r_{j+1}\not\in\{i,i+1\}$.
Then there is a $q\in\{0,1\}$ such that
the horizontal run $\M_{i+q,j}\cup\M_{i+q,j+1}$ is safe.
In other words, it is possible to pass horizontally between the two
obstacles.
  \end{lemma}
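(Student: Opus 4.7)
The plan is to split on the position of the two obstacles relative to the rows $i$ and $i+1$ and, in every case, use the column-wise $1$-step-cleanness that is guaranteed by the unimodality established in Lemma~\ref{l.three-cases}. The hypothesis $r_{j},r_{j+1}\notin\{i,i+1\}$ has three sub-cases: both obstacles strictly below row $i$, both strictly above row $i+1$, or one on each side. The key observation is that in each of these three sub-cases, for each of the two columns $j$ and $j+1$, the two cells at heights $i$ and $i+1$ both lie on the \emph{same} side of that column's obstacle. Hence the vertical pair of cells at heights $i,i+1$ within column $j$ (respectively column $j+1$) forms a run lying in one of the two $1$-step-clean components of the $1$-unimodal column, and so is $1$-safe.

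From this I would conclude that, writing $a_{q,c}=\mu(\M_{i+q,c})$ for $q\in\{0,1\}$ and $c\in\{j,j+1\}$, one has
\begin{align*}
a_{0,j}+a_{1,j} < (\safe-\delta)\B, \qquad a_{0,j+1}+a_{1,j+1} < (\safe-\delta)\B,
\end{align*}
so the four cells together have total mass strictly less than $2(\safe-\delta)\B$. If \emph{both} horizontal runs $\M_{i,j}\cup\M_{i,j+1}$ and $\M_{i+1,j}\cup\M_{i+1,j+1}$ failed to be safe, their weights would each be at least $\safe\B$, summing to at least $2\safe\B$, which contradicts the bound $2(\safe-\delta)\B<2\safe\B$. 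Thus at least one of the two horizontal pairs, corresponding to some $q\in\{0,1\}$, is safe, which is exactly the conclusion.

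The only delicate point is that $1$-step-cleanness of one side of the obstacle really does control the pair $(\M_{i,c},\M_{i+1,c})$; this is fine as long as this vertical pair is a run of two consecutive cells that does not straddle the obstacle, which is precisely what the hypothesis $r_{c}\notin\{i,i+1\}$ ensures. I expect no other obstacles in the argument, since the constants work: $2(\safe-\delta)<2\safe$ is automatic, and Lemma~\ref{l.three-cases} supplies the unimodality uniformly for all columns once we are in the regime where no clean column exists.
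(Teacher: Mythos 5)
Your proof is correct and uses the same core argument as the paper: unimodality of each column (from Lemma~\ref{l.three-cases}) gives that the two vertical pairs $\M_{i,c}\cup\M_{i+1,c}$, $c\in\{j,j+1\}$, are safe since the hypothesis $r_c\notin\{i,i+1\}$ prevents them from straddling the obstacle, and then averaging over the four cells forces one horizontal pair to be safe. In fact your indexing is the more careful one: the paper's own proof shifts to heights $i+1,i+2$ and concludes for $q\in\{1,2\}$, an off-by-one slip relative to the stated $q\in\{0,1\}$ that your version corrects.
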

  \begin{proof}
 Since the runs $\M_{i+1,j}\cup\M_{i+2,j}$ and
 $\M_{i+1,j+1}\cup\M_{i+2,j+1}$ are safe, the measure of the union of these
 four cells is $<2\B\safe$, hence one of the horizontal runs
 $\M_{i+q,j}\cup\M_{i+q,j+1}$ with $q\in\{1,2\}$
 has measure $<\B\safe$.
  \end{proof}

Consider the case when there is a $j$ with $|r_{j}-r_{j+1}|>2$.
We can escape then as follows.
Assume without loss of generality $r_{j+1} < r_{j} - 2$.
Then by the lemma there is a $q\in\{1,2\}$ such that
the run $\M_{r_{j}-q,j}\cup\M_{r_{j}-q,j+1}$ is safe.
Therefore one can walk to column $j$
(again possibly using an intermediate column $C(2)$ and row
$R'(i')$), then
up to $(r_{j}-q,j)$, pass to $(r_{j}-q,j+1)$ and further up in column
$(j+1)$.
The geometric cost is at most $\B(40 + 2\Q + 3\Q)\le 6\Q\B$, 
the number of moves is at most $5\Q$.
Charging is done as usual, but as remarked above is not a problem anyway.

 \begin{definition}\label{d.straight}
We will say that in a big jump, the body $\M^{*}$, or in a big attack, 
the reduced body $\ul\M^{*}$ has the \df{marginal case} if the following
holds: 
 \begin{enumerate}[(a)]
  \item Every column is unimodal.
  \item The rows $R'(i)$ contain no obstacle.
  \item For all $j$ we have $|r_{j}-r_{j+1}|\le 1$.
 \end{enumerate}
Otherwise we have the \df{straight case}.
 \end{definition}

In summary, we found a strategy for the straight case with geometric cost
at most $\B(40 + 5\Q)\le 6\Q\B$ and at most $5\Q$ moves.

Assume now that we have the marginal case.
In this case, attacks will be performed.
We need some cells to blame in the case of an attack that became
disallowed.

 \begin{definition}
Let $r_{j}$ denote the height of the obstacle in column $j$ of $\M^{*}$.
A position $(i,j)$ is called \df{northwards bad} if the run in column $j$
starting with it and ending in row $R''(1)$ contains the obstacle of a
disallowed attack, which is also the obstacle cell of a column.
This obstacle cell will be called a \df{scapegoat cell}.
 \end{definition}

The lemma below lowerbounds the weight of the scapegoat cell.

 \begin{lemma}\label{l.no-attack}
The weight of the scapegoat cell is lowerbounded by $(1-\delta)\B/6$.
 \end{lemma}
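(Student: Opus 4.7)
The plan is a pigeonhole argument on the (reduced) body of the disallowed attack. The first step is to observe that, both for new attacks and for continuing attacks, the relevant body---the full body in the new case, the reduced body in the continuing case---is a vertical run of at most six cells: the new-attack pre-template $\cU\cup\{(0,\s)\}$ has size $5$ when $\s\in\{-1,-2,-3\}$ and $6$ when $\s=-4$, and the right column of a continuing attack $\cU\cup\{(0,-\s)\}$ has the same form. By the definition of ``obstacle'' as the heaviest cell in a run, the scapegoat cell is the heaviest cell of this run of at most six cells; this is the structural input we will pair with badness to get the bound.

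Next I would unpack what ``disallowed'' means. By the angel's rule for attacks, such a move is forbidden precisely when either (A) the below-obstacle run of the reduced body fails to be $(-1)$-safe, or (B) the (reduced) body itself fails to be good. In case (B), the at-most-six-cell body carries mass $\ge\B$ at the instant the attack is attempted; by the earlier lemma bounding the growth of total mass by $\delta\B$ over the $\le \nu$ small moves of one big move, the same run has $\mu_{0}$-mass at least $(1-\delta)\B$. Pigeonhole over the at most six cells then gives $(1-\delta)\B/6$ as a lower bound on the $\mu_{0}$-weight of the heaviest cell, which is the scapegoat cell.

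In case (A), the below-obstacle run has at most four cells and carries mass $\ge(\safe+\delta)\B$; since the obstacle is at least as heavy as every cell of that run, its weight is at least $(\safe+\delta)\B/4$, and the relations $\safe>2/3$ and~\eqref{e.delta-ub} make this comfortably exceed $(1-\delta)\B/6$ even after subtracting the $\delta\B$ slack needed to pass from the current measure to $\mu_{0}$.

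The only subtlety I anticipate is keeping the two measures straight (the initial $\mu_{0}$ versus the current measure at the moment of disallowance) so that the factor $1-\delta$ is correctly accounted for; once that bookkeeping is clear, the lemma reduces to pigeonhole on a run of length at most six together with the mass-growth estimate, and the worst case comes from (B) with six cells, which gives exactly $(1-\delta)\B/6$.
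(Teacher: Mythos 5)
Your case (B) is exactly the paper's argument: the paper's proof lets $\U = \U_1\cup\dots\cup\U_6$ be the reduced body of the disallowed attack with $\U_i$ the scapegoat cell, uses $\B\le\mu_t(\U)\le\mu_0(\U)+\delta\B\le 6\mu_0(\U_i)+\delta\B$ (the middle inequality being the same $\delta\B$ growth bound over $\le\nu$ moves that you cite), and concludes $\mu_0(\U_i)\ge(1-\delta)\B/6$. Same pigeonhole, same bookkeeping of $\mu_0$ versus $\mu_t$.

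Where you differ is that you also treat case (A), in which the below-obstacle run fails to be $(-1)$-safe. The paper's proof does \emph{not} address this case at all --- it uses $\B\le\mu_t(\U)$, which is precisely condition (B) failing, and says nothing about the alternative. (The parenthetical ``namely the body of that attack is good'' in the success branch suggests the author is tacitly identifying ``allowed'' with condition (B) alone; whether (A) is guaranteed in the marginal-case sweep or whether this is an oversight is not made explicit in the text.) So your version is strictly more complete. One small slip in your case (A): you pigeonhole first (getting $(\safe+\delta)\B/4$ on the $\mu_t$-heaviest cell) and subtract $\delta\B$ afterwards, which gives $\safe\B/4 - 3\delta\B/4$; for $\safe$ close to $2/3$ with $\delta$ near its cap from~\eqref{e.delta-ub} this can fall short of $(1-\delta)\B/6$. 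The fix is to subtract the $\delta\B$ slack from the whole run first, giving $\mu_0(\text{run})\ge\safe\B$, and then pigeonhole to get $\mu_0(\text{obstacle})\ge\safe\B/4 > \B/6 > (1-\delta)\B/6$, which holds unconditionally under~\eqref{e.delta-safe-swell}.
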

 \begin{proof}
Let $\mu_{\t}\ge \mu_{0}$ 
be the measure at the time when the attack is disallowed.
Let $\U=\U_{1}\cup\dots\cup\U_{6}$ be the reduced body of 
the attack, and let $\U_{i}$ be the scapegoat cell.
Then $\mu_{0}(\U_{i})\ge (1-\delta)\B/6$.
Since each small move has weight $\le 3\B$ and since there will be
at most $\nu$ small moves per big move, and due to~\eqref{e.swell-ub},
during the implementation the weight of $\U$ could increase by at most
$3\nu\swell\growth\B\le\delta\B$, hence we have
 \begin{align*}
 \B\le \mu_{t}(\U) \le \mu_{0}(\U) +\delta\B \le 6\mu_{0}(\U_{i}) + \delta\B.
 \end{align*}
 \end{proof}

\subsubsection{Preparing a sweep}

In the marginal case, we have no obstacles in the clean row $R'(0)$ 
above the starting cell.
We step up to this row.
If row $R''(1)$ is securely reachable from some cell of $R'(0)$ then we can
pass there and charge our costs again as usual.
Suppose that this is not the case.

We pass to column 1.
Now we are below the obstacle in column 1.
We step up to height $i_{0}=\min(r_{1}-2,r_{2}-2)$, then 
$r_{1}-4\le i_{0}$.
The geometric cost of these preparatory steps is at most 
$\B(8 + \Q + 3\Q)$, and we make at most $4\Q$ moves.

Now in a series of northward attacks passing to the right we will
explore the obstacles.
We will call this latter series a \df{right sweep}.
At the latest, in column $\Q$ our attack will succeed.
Indeed, each time we pass to the right the column we have left is bad,
and the conditions of the big step (that the body is $(-1)$-good)
do not allow that the whole big body
become bad by the end of the implementation.

A sweep will be an alternation of procedures called
\df{success branch} and \df{failure branch}.

 \begin {figure}
\begin {equation*}
\includegraphics{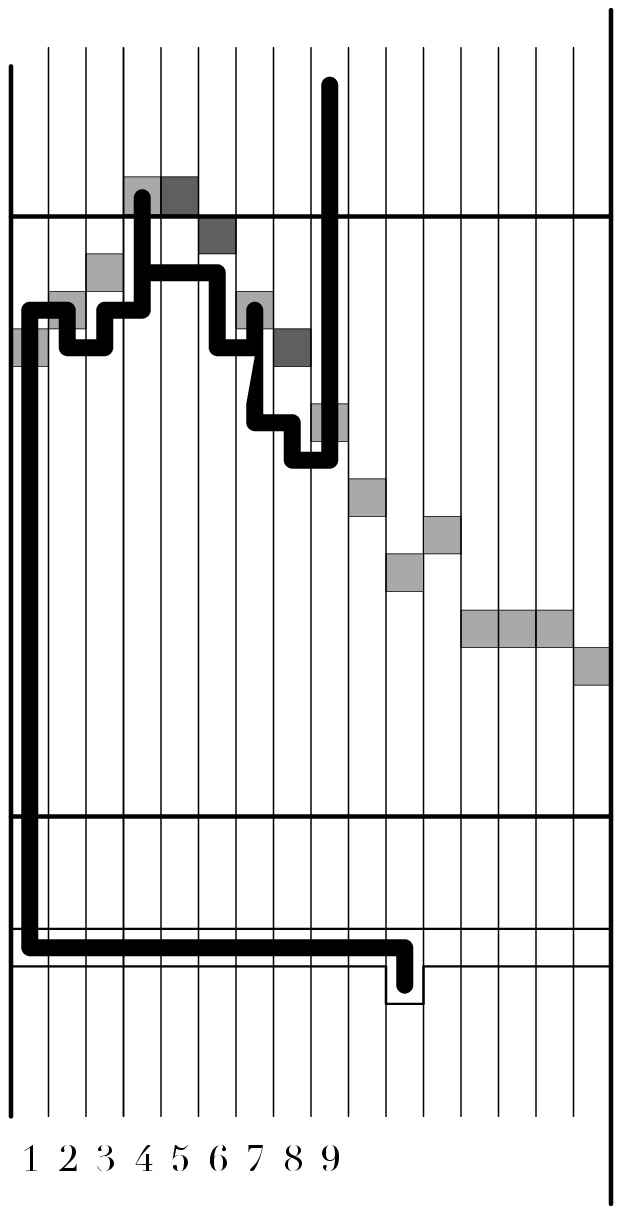}
\end {equation*}
\caption {Implementing a sweep.
Here is a description of what happens in the first few columns.
1: Failed new attack, continuing attack.
2,3: Failed continuing attack, followed by new continuing attack.
4: Failed continuing attack followed by finish, right step.
5,6: evasion.  The darker grey squares are scapegoat cells.
7: Failed new attack followed by finish, right step.
8: evasion. 9: Successful new attack.
\label{f.attack}}
 \end {figure}

\subsubsection{Success branch} 

\begin{remark}
When the procedure below calls for an eastward step followed by a northward
new attack, it is understood that actually the two are combined into a turn
move. 
\end{remark}

A success branch starts after a successful move.
If it was a successful attack we escape, at no extra geometric cost.
Otherwise we are either before the initial attack in column 1, or are
coming from the left.

We are in some column $j$, in some current row $i_{0}$, with
$r_{j}-5 \le i_{0}<r_{j}$.
If a new attack is allowed (namely the body of that attack is 
good), then we make it, of level $i_{0}-r_{j}$.
We end up at a height $i_{1}$ with $|i_{1}-r_{j+1}|\le 1$.
If it is not allowed then if $j=\Q$ we halt, otherwise what follows will be
called an \df{evasion}.

By assumption $r_{j}-2 \le r_{j+1} \le r_{j}+2$.
If $r_{j+1} < r_{j}$
then we step up or down from $i_{0}$ to $r_{j+1}-q$ for some
$q\in\{1,2\}$ and pass to the right.
If $r_{j+1} \ge r_{j}$ then we step up or down from $i_{0}$ to
$r_{j}-q$ for some $q\in\{1,2\}$ and pass to the right.
(Note that we never have to pass down if we are in column 1, since there we
are at height $\min(r_{1}-2,r_{2}-2)$.)
In both cases we end up at a new height $i_{1}$ with
$r_{j+1}-4\le i_{1}<r_{j+1}$.

An extra $\cost_{2}\B$ will also be charged to that scapegoat cell,
to compensate for the profit that we do not have since we did not have a
continuing attack.
Inequalities~\eqref{e.cost2} and~\eqref{e.cost1} show 
that the scapegoat cell has enough weight for these charges.
(This part is not important for the case of a big step, but we will reuse
the analysis of the marginal case of the big step in the case of a
failed attack, where the profit is needed.)

A success branch spends at most $5$ moves in each column (5 in case of
evasion, 1 in case of new attack).
If we made a new attack but we came from an evasion,
we charge the geometric cost of the new attack and the missing profit
to the scapegoat cell above the northwards bad position in the previous
evasion.
Inequalities~\eqref{e.cost2} and~\eqref{e.cost1} show 
that the cell has enough weight
for these charges even in addition to the charges made for the evasion
itself.

\subsubsection{Failure branch}

A failure branch starts after a failed attack, so $|i_{0}-r_{j}|\le 1$.
Let $i_{1}$ be height of the bottom cell of the body of that attack.
We halt if $j=\Q$ (never happens in the implementation of a big step).
If $i_{0} - r_{j+1} > 1$ then we make an escape move and then escape,
at no extra geometric cost.
Suppose $i_{0}-r_{j+1}\le 1$.
Since $r_{j+1}\le r_{j}+2$ we have $-3 \le i_{0}-r_{j+1} \le 1$.
If a continuing attack with obstacle $r_{j+1}$
is allowed we make it, at no extra geometric cost or lost profit.
Suppose it is not allowed.
Let
 \begin{align*}
  i_{2} =  \min(i_{0}-1,r_{j}-1,r_{j+1}-1).
 \end{align*}
By Lemma~\ref{l.distant-obstacles}
a step right is possible at height $i_{2}-q$ for some
$q\in\{0,1\}$.
We can get to $i_{2}-q$ using a finish move taking us to
$\max(i_{1},i_{2}-q)$ and following it with some downward steps.
After moving to column $(j+1)$
we are positioned for a success branch, at a position $(i_{3},j+1)$
with $r_{j+1}-4 \le i_{3} < r_{j+1}$.

A failure branch spends at most $6$ moves per column, at a geometric
cost of at most $7\B$ which we will charge to the scapegoat cell of the
disallowed attack.
We will even charge the next evasion to it, which will still be allowed 
by~\eqref{e.cost1}.

The final escape has no geometric cost and takes at most $3\Q$ moves.

\subsubsection{Summary of costs of the marginal case of a big step}

In the marginal case the extra geometric cost is
at most $\B(5 + 4\Q)\le 5\Q\B$ for the part before the sweep
(the one during the sweep is accounted for).
The number of moves is at most $4\Q + 7\Q + 3\Q=14\Q$, where we also
counted the moves of the final escape.

If we succeed before column $C_{\Q}$ then we can charge the extra geometric
cost to the blameable run from $R'(0)\cap C_{\Q}$ to $R''(1)$.
Otherwise we turn a profit in all columns but possibly
the first one (not a continuing attack)
and the last one (not a failed attack), 
so we can charge the extra geometric cost to
this profit of size $(\Q-2)\cost_{2}\B$, via~\eqref{e.cost2}.

\subsection{Big attack}

\subsubsection{New attack}

We only look at northward attacks passing to the right.
A new northward attack
is implemented just like a jump: the only difference is
that it may actually fail.
We say that the attack fails if either the last attack in column $\Q$ fails
or we find ourselves in an northwards bad position in the last column.
\emph{This is the definition of the function $\J(\cdot)$ of the
  implementation}. 

Also, row $R'(i)$ is defined, instead of Definition~\ref{d.R''(i)},
as the $(i+1)$th southernmost $1$-clean
row of the colony immediately below the obstacle colony of the big attack.
Then we can conclude that in the marginal case
all obstacles are above even these rows $R'(i)$.

The bounds on the geometric cost change only by the consideration
the body of an attack may be by $3\Q$ longer than that of a jump.
So the extra geometric cost will be bounded by
$\B(8 + 7 \Q)\le 8\Q\B$ and the number of moves by $17\Q$.

Suppose therefore that 
our attack with body $\M_{1}^{*}$ is a continuing one, then the previous
move was a failed attack, also to the north and passing to the right,
with some body $\M_{0}^{*}$.
The last move
of its implementation was either a failed attack, or a successful move
ending in an northwards bad position.
We are in the rightmost column of $\ul\M_{0}^{*}$.

\subsubsection{Marginal case}

If the attack of $\ul\M_{1}^{*}$ has
the marginal case, as defined in Definition~\ref{d.straight}
then its columns are unimodal.
In this case we can just continue the sweep to the right into
$\ul\M_{1}^{*}$ seamlessly, except that in the first column
of $\ul \M_{1}^{*}$ we may have to walk up as in case of the first column
of the marginal case of a big jump.
(One could also just step back to the last column of $\ul\M_{0}^{*}$ and
escape, but we will pass this opportunity, for the sake of orderliness.)

The walk-up entails no extra geometric cost.
The cost of the rest of the implementation will be estimated
just as for new attacks.
If the attack is a failed one we always have the marginal case, and
we do not escape.
In this case each column contributes its required profit: the ones that did
not 
were compensated by others, as shown in the implementation of the marginal
case of a big step.

\subsubsection{Straight case}

Assume now that the the attack of $\ul\M_{1}^{*}$ has
the straight case, and we are at position $(\m,\Q)$ of $\ul\M_{0}^{*}$.

 \begin{enumerate}[1.]
  \item
Suppose that the northwards 
run of column 1 of $\ul\M_{1}^{*}$ starting from position $(\m-1)$
to row $R''(1)$ is safe.
If the last step of the big attack of $\ul\M_{0}^{*}$ 
was a failed attack we make an escape move into $\m$ and then escape,
at no extra geometric cost.

If the last step was a successful move then
the starting position is northwards bad.
We step right if
this is possible, otherwise we step down and step right: just as in the evasion
procedure, this is always possible.
Then we escape north.
The geometric cost of these digression steps is charged to the scapegoat
cell above the northwards bad starting position.
(Note that the body of the big escape and of each big continuing attack
contains a big colony above the starting big colony, so the scapegoat cell
is inside this body.)

  \item
Suppose now that the northwards 
run of column 1 of $\ul\M_{1}^{*}$ starting from position $(\m-1)$
is not safe.
In case the last step was a failed attack we add a southward finish move
taking us to the bottom of the reduced body of that attack.
Now we are in a southward-clear point in a step-clean run below the
obstacle of the last column of $\ul\M_{0}^{*}$, which stretches 
down at least two big colonies below the current big colony.
One of these big colonies, say $\U_{0}^{*}$, is such that
its right neighbor $\U_{1}^{*}$ 
is below the obstacle of the big attack of $\ul\M_{1}^{*}$.

Let $\V^{*}$ be the set consisting of $\U_{1}^{*}$ and 
the last column of $\U_{0}^{*}$.
This set has at least $\kappa$ clean rows.
Indeed, since $\U_{1}^{*}$ is safe, we have $\mu(\U_{1}^{*})\le\safe\B\Q$.
For the last column $C_{\Q}$ of $\U_{0}^{*}$ we have
$\mu(C_{\Q})\le (1+\Q\delta)\B$.
So we have
 \begin{align*}
 \mu(\V^{*}) \le \B\Q\safe + \B + \B\Q\delta,
 \end{align*}
therefore Lemma~\ref{l.good} is applicable.

 \begin{enumerate}[a.]
 \item
Suppose that there is a  column $C(1)$ of $\ul\M^{*}$ different from the
first column, a clean row $R(1)$ of $\V^{*}$ that intersects both
$C_{\Q}$ and $C(1)$ securely, and an $i''_{1}>1$ such that
$R''(i''_{1})$ is securely reachable in $C(1)$ from $R(1)$.
(This is always the case if $\ul\M_{1}^{*}$ has a $1$-good column
different from the first one.)
We then walk down into $R(1)$, then walk over to $C(1)$, 
and finally walk up in $C(1)$ to under $R''(i''_{1})$.
The geometric cost is at most $\B(16+2\Q+\Q+6\Q)=\B(16+9\Q)\le 10\Q\B$, and the
number of moves is at most $9\Q$.
We charge all this to the unsafe run from $(\m-1,1)$ to $R''(1)$.

  \item 
Suppose now that the above case does not hold.
Let $R(1)$ be any clean row of $\ul\M_{1}^{*}$ securely intersecting
$C_{\Q}$, and
let $C(1)$ be any column of $\ul\M_{1}^{*}$ different from the first column,
securely intersecting $R(1)$,
then  $R''(2)$ is not securely reachable from $R(1)\cap C(1)$.

If column 1 of $\ul\M_{1}^{*}$ is $1$-good
then there is an $i''_{1}>2$ such that $R''(i''_{1})$ intersects
column 1 securely.
We can move over to column 1 and escape to the cell below $R''(i''_{1})$,
charging its costs (similar to the above ones)
to the scapegoat run of $R(1)\cap C(1)$, $R''(2)$.

 \item
Suppose finally that no column of $\ul\M_{1}^{*}$ is $1$-good, but that
there is a $j$ such that $|r_{j+1}-r_{j}|>2$.
We can then escape similarly to how we did in the big jump.
The geometric cost is at most $\B(40 + 10\Q)\le 11\Q\B$, the 
number of moves is at most $10\Q$.
Charging is again not a problem just as there.

 \end{enumerate}

 \end{enumerate}

\subsection{Big escape or big finish}

A big escape is implemented just like a big continuing attack;
the only difference is that we always have the straight case
since the reduced body is safe.

A big finish is applied under some circumstances
after a failed big attack with reduced body $\ul\M_{0}^{*}$.
The last move of the implementation of the big 
attack was either a failed attack, or a rightward step
ending in a northwards bad position in the rightmost column
of $\ul\M_{0}^{*}$.
If it was a failed attack we apply a small southward finish to arrive
at the bottom cell of the failed attack or one cell below it,
to be in a southward-clear point.
In both cases then we step down to the cell above
the second highest $1$-clean row of the lowest colony of the big finish move.
(The end result must be southward-clear.)

There is no extra geometric cost.
The number of moves is at most $6\Q$.

\subsection{Big turn}

A northward-eastward turn consists of a northward step
with body $\M_{0}^{*}$ followed by an eastward jump or northward-sweeping
eastward attack with body $\M_{1}^{*}$. 
Of course, the destination colony $\D_{0}^{*}$ of the first part coincides
with the start colony $\S_{1}^{*}$ of the second part.

Since the second part of the turn is a big eastward jump, 
in its discussion what were columns of the discussion of a
big northward jump become rows and vice versa.
Initially we are below row $R'(0)$ of $\S_{0}^{*}$ and in column
$C(0)$ of $\M_{0}^{*}$.

The key to the implementation of the turn is that there are $\kappa$ 
clean columns of the big step, so we can direct the
implementation of the big step in such a way as to arrive into
an appropriate column of $\ul\M_{1}^{*}$.

 \begin{enumerate}[1.]

  \item Suppose that we have the marginal case of $\M_{1}^{*}$.
Then we implement the big step in such a way that we arrive into
the $\M_{1}^{*}$ along a clean column that crosses the first row of
$\M_{1}^{*}$ securely.
Then we turn east and after walking right near the obstacles
begin a northward sweep of a series of eastward
attacks as in the implementation of an ordinary big eastward jump.

  \item Suppose that we have the straight case of $\M_{1}^{*}$.
We then can direct the implementation of the big step $\M_{0}^{*}$ 
in each subcase in 
such a way that we will escape similarly to the corresponding
subcase of the straight case of a big jump.
For example if $\M_{1}^{*}$ has a $1$-good row $R(1)$
then we will arrive
along a $1$-good column of $\M_{0}^{*}$ that intersects $R(1)$
securely.
 \end{enumerate}

We charge the extra geometric cost of the step to the geometric cost
of the turn which is defined as $8\Q\B$ instead of $5\Q\B$
to accomodate this.
The extra geometric cost of the jump can be charged as before.

This concludes the construction.
It is easy to check that it satisfies the requirements of an
implementation and thus the proves of the main lemma.

\section{Nested strategies}

With the proof of the main lemma,
it will be clear to some readers that the angel has a winning
strategy.
What follows is the formal definition of this strategy based on
the implementation map.
Let us first define the notions of strategy used.

 \begin{definition}
Let $\Configs^{+}$ be the set of nonempty finite sequences of
configurations.
We will use the addition notation
 \begin{align*}
 \gamma' + \dmove = \gamma
 \end{align*}
to add a new configuration to a sequence $\gamma'$.
A \df{plain strategy} is a map
 \begin{align*}
   \eta: \Configs^{+} \to \Lmoves
 \end{align*}
giving the angel's next move after each d-history.

We will call a plain strategy \df{winning} if it has the following
property.
For $\n>1$, let 
$(\dmove_{1},\amove_{1},\dots,\dmove_{\n},\amove_{\n})$ be an a-history
in which 
 \begin{enumerate}[(a)]
  \item $\dmove_{1}$ is the default configuration, and
for each $i>1$, $\dmove_{i}$ is a permitted move of the devil after the
a-history $(\dmove_{1},\dots,\amove_{i-1})$.
  \item For each $i\le n$, we have $\amove_{i}=\eta(\dmove_{1},\dots,\dmove_{i})$.
 \end{enumerate}
Then $\amove_{\n}$ is a permitted move of the angel after the d-history
$(\dmove_{1},\amove_{1},\dots,\dmove_{\n})$.
 \end{definition}

We will define a winning plain strategy with the help of nested strategies, which
will be constructed with the help of the implementation map.
We can scale the map of the lemma 
into maps $(\phi_{\k},\J_{\k})$ and then use it to obtain an
an amplifier for $\Phi_{1},\Phi_{2},\dots$.
where $\bbG_{\k+1}=\bbG_{k}^{*}$, and $\bbG_{k}$ has colony size 
$\B_{k}=\Q^{k}$.

 \begin{definition}\label{d.stack}
A \df{stack} for game $\bbG_{\k}$ 
is a finite nonempty sequence of nonempty legal a-histories
$(\chi_{1},\dots,\chi_{\m})$, where
$\chi_{i}\in\Histories_{a}(\B_{i+\k-1})$,
and if $\m>1$ then the last history $\chi_{\m}$ is not the default
record.
It is understood that this finite sequence of a-histories
stands for the infinite sequence in which each $\chi_{i}$ with
$i>\m$ is the default a-history $(\record_{0})$.
Let
 \begin{align*}
   \Stacks_{\k}
 \end{align*}
be the set of all possible stacks for game $\bbG_{\k}$.
The interpretation of a stack can be that $\chi_{i}$ for $i\ge\k$ 
is the history of game $\bbG_{i}$ played so far, in a
translation of the last step of game $\bbG_{i+1}$.
(This interpretation imposes more restrictions on the possible stacks, but
we do not need to spell these out.)
 \end{definition}

 \begin{definition}
A \df{nested strategy} for game $\bbG_{\k}$ is a map
 \begin{align*}
        \psi : \Configs\times\Stacks_{\k} &\to \Stacks_{\k},
\\   (\dmove,(\chi_{1},\dots,\chi_{\m})) &\mapsto
   \psi(\dmove \mid \chi_{1},\dots,\chi_{\m}).
 \end{align*}
Here $\psi(\dmove \mid \chi_{1},\dots,\chi_{\m})
 = (\chi'_{1},\dots,\chi'_{\n})$ with $\n\in\{\m,\m+1\}$.
 \end{definition}

The interpretation of a nested strategy for game $\bbG_{1}$
is the following.
Consider an amplifier $\bbG_{1},\bbG_{2},\dots$.
Then the angel of game $\bbG_{1}$
uses the the current configuration, further the following
histories: 
the history $\chi_{1}$ of game $\bbG_{1}$ since the beginning of the last
step 
of game $\bbG_{2}$ (the earlier history of game $\bbG_{2}$ is not needed),
the history $\chi_{2}$ of game $\bbG_{2}$ since the beginning of the last
step of game $\bbG_{3}$, and so 
on.\footnote{The reader may be amused by a faintly 
analogous idea in the poem Ajedrez (Chess) II by Jorge Louis Borges
(findable on the internet).
Borges refers back to Omar Khayyam.}
The strategy computes the new next step of the angel
and the corresponding new stack of histories.
The following definition describes how this is done in our case.

 \begin{definition}
Assume that we are 
given an amplifier $\bbG_{1},\bbG_{2},\dots$ with implementation maps
$(\phi_{1},\J_{1})$, $(\phi_{2},\J_{2})$, $\dots$.
We define a nested 
strategy $\psi_{\k}$ for each game $\bbG_{\k}$ (the functions
$\psi_{\k}$ for different $\k$ actually differ only
in the scale $\B_{\k}$).
We want to compute 
 \begin{align*}
       \Psi = \psi_{\k}(\dmove \mid \chi_{1},\dots,\chi_{\m}).
 \end{align*}
The definition is by induction on the length $\m$ of the stack.
If $\m=1$ then let $\chi_{2}=(\record_{0})$, the default a-history.
Let $\record(\chi_{2})$ be the last record of $\chi_{2}$, and
 \begin{align}\label{e.level-1}
   \amove = \hat\phi_{\k}(\chi_{1}+\dmove \mid \record(\chi_{2})).
 \end{align}
If $\amove$ is not the halting move then
$\Psi = (\chi_{1}+\dmove+\amove,\chi_{2},\dots,\chi_{\m})$.
Assume we have the halting move.
Then we set
 \begin{align}
\nonumber
  \dmove^{*} &= \hat\J_{\k}(\dmove\mid\chi_{1},\dots,\chi_{\m}),
\\\label{e.recurse}
   (\chi'_{2},\dots,\chi'_{\n}) &=
 \psi_{\k+1}(\dmove^{*} \mid  \chi_{2},\dots,\chi_{\m}),
\\\label{e.halting-case}   
   \amove' &= \hat\phi_{\k}((\dmove) \mid \record(\chi'_{2})),
\\\nonumber   \Psi &= ((\dmove,\amove'),\chi'_{2},\dots,\chi'_{\n}).
 \end{align}
In case $\m=1$ the 
step~\eqref{e.recurse} does not lead to another recursive step.
Indeed, then $\chi_{2}=(\record_{0})$ and then
step~\eqref{e.level-1} gives 
$\hat\phi_{\k+1}(\record_{0}+\dmove^{*} \mid \record(\chi_{2}))$.
According to condition~\eqref{i.when-halt} of Definition~\ref{d.phi},
the result here cannot be the halting move.
Step~\eqref{e.halting-case} cannot yield the halting move either.
 \end{definition}

It is easy to check by induction that the output of $\psi_{\k}$ is indeed a
stack satisfying the requirements of Definition~\ref{d.stack}.

Let us derive a winning plain strategy from a nested strategy.

 \begin{definition}
Let $\psi$ be a nested strategy for game $\bbG_{1}$.
We define a plain strategy $\eta(\gamma)$ for $\bbG_{1}$.
As mentioned in the definition of plain strategies we only consider
response histories
$\gamma$ which start with the default configuration $\dmove_{0}$.
We will make use of an auxiliary function 
 \begin{align*}
 \hat\eta : \Configs^{+} \to \Stacks_{\k}.
 \end{align*}
Then if $\hat\eta(\gamma)=(\chi_{1},\dots,\chi_{\m})$ 
we define $\eta(\gamma)$ as the last move of $\chi_{1}$.

The definition of $\hat\eta$ is by induction.
For a sequence consisting of a single configuration we define
 \begin{align*}
   \hat\eta((\dmove_{0})) = \psi(\dmove_{0} \mid (\record_{0})).
 \end{align*}
If $\gamma= \gamma'+\dmove$ then let 
 \begin{align*}
  (\chi_{1},\dots,\chi_{\m}) &= \hat\eta(\gamma'),
\\          \hat\eta(\gamma) &= \psi(\dmove\mid \chi_{1},\dots,\chi_{\m} ).
 \end{align*}
 \end{definition}

The theorem below implies Theorem~\ref{t.angel-wins}.

 \begin{theorem}
If $(\phi_{\k},\J_{\k})$ is an implementation map for each $\k$
then the plain strategy $\eta$ defined above is a winning strategy for the
angel.
 \end{theorem}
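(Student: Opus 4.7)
The plan is to prove, by induction on the length of the devil's response history $\gamma$, that the auxiliary stack $\hat\eta(\gamma)=(\chi_{1},\dots,\chi_{\m})$ satisfies the following invariant: each $\chi_{i}$ is a legal a-history of $\bbG_{i}$; for $i<\m$ the last located move of $\chi_{i+1}$ is the big move at level $i+1$ currently being implemented, with $\chi_{i}$ recording the small moves executed so far at level $i$; and the last angel move of $\chi_{1}$ is a permitted move of $\bbG_{1}$. Once this invariant is established, the theorem is immediate: $\eta(\gamma)$ is by definition the last move of $\chi_{1}$, so $\eta$ produces a permitted angel move after every legal d-history, which is exactly the definition of a winning plain strategy.

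For the base case $\gamma=(\dmove_{0})$ we have $\m=1$ and $\chi_{2}=(\record_{0})$, so $\hat\phi_{1}((\dmove_{0})\mid\record_{0})$ is evaluated on a d-history consisting of a single step; by clause~\eqref{i.when-halt} of Definition~\ref{d.phi} it cannot return $\halt$, so by clause~(a) the returned move is a permitted move of $\bbG_{1}$. For the inductive step with $\gamma=\gamma'+\dmove$, I would trace the recursive definition of $\psi_{1}$. If $\hat\phi_{1}(\chi_{1}+\dmove\mid\record(\chi_{2}))$ is not $\halt$, Definition~\ref{d.phi}(a) gives a permitted move, and the extended stack clearly preserves the invariant. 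If it is $\halt$, then $\hat\J_{1}(\dmove\mid\chi_{1},\dots,\chi_{\m})$ is, by Definition~\ref{d.phi}(b), a permitted devil move of $\bbG_{2}$ following $\chi_{2}$; the recursive call $\psi_{2}(\dmove^{*}\mid\chi_{2},\dots,\chi_{\m})$ therefore receives a legal input, and by a further level of recursion produces a stack whose top history $\chi'_{2}$ extends $\chi_{2}+\dmove^{*}$ by a permitted move, i.e.\ begins a new big move at level~2. Finally, line~\eqref{e.halting-case} evaluates $\hat\phi_{1}((\dmove)\mid\record(\chi'_{2}))$ on the singleton d-history $(\dmove)$, and by Definition~\ref{d.phi}(a) returns a permitted angel move of $\bbG_{1}$ that begins the implementation of the freshly-launched big move.

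The main point to verify is that the ``handoff'' between adjacent levels is coherent: the newly appended record in $\chi'_{2}$ must describe a big move whose starting essential configuration matches the current one at level~1, so that the call $\hat\phi_{1}((\dmove)\mid\record(\chi'_{2}))$ is legitimately starting to implement it. This coherence is built into Definition~\ref{d.phi}(b), which requires $\hat\J_{\k}$ to produce a devil move at the higher level whose essential configuration agrees with the one reached at the lower level at the moment of handoff. Termination of the recursion through $\psi_{2},\psi_{3},\dots$ uses the observation already noted in the text: once the recursion reaches a call of stack length one, the auxiliary default $\chi_{2}=(\record_{0})$ forces $\hat\phi_{\k+1}$ to act on a one-step d-history, where $\halt$ is forbidden by clause~\eqref{i.when-halt}; hence the recursion bottoms out with a genuine move and unwinds upward through the $\hat\phi_{\k}$ calls at each level. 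I expect no further obstacle; everything else is routine bookkeeping from the definitions of implementation map, stack, and nested strategy.
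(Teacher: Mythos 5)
Your argument is correct and is essentially the same as the paper's: both rest on the observation that every move $\eta$ outputs is obtained from an application of $\phi_{1}$ to a legal d-history (via~\eqref{e.level-1} or~\eqref{e.halting-case}), so Definition~\ref{d.phi}(a) makes it a permitted angel move, and the recursion bottoms out because clause~\eqref{i.when-halt} forbids $\halt$ on a d-history of fewer than two steps. The paper states this as a three-line ``tracing back the definition''; your induction on the length of $\gamma$, with the stack invariant and the handoff-coherence check, simply spells out the bookkeeping that the paper leaves implicit.
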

 \begin{proof}
The plain strategy of $\eta$ was defined above via the nested strategy 
$\psi$.
Tracing back the definition of $\psi$ we see that
the next move $\amove_{i}$ of the angel is always computed
applying $\phi_{1}(\dmove_{i}\mid \record)$ for an appropriate record:
see~\eqref{e.level-1} and~\eqref{e.halting-case}.
Since $\phi_{1}$ is an implementation map, the resulting move is always
allowed.
 \end{proof}

\section{Conclusions}

One would think that a strategy depending only on the present position and
measure should also be possible.

\section{Acknowledgement}

The author acknowledges the comments of David Charlton during the many
discussions of the problem.


\begin{thebibliography}{11}

\bibitem{Conway96angel}
 John H. Conway, The angel problem, in: Richard Nowakowski (editor) Games
 of no chance, volume 29 of MSRI Publications, pages 3-12, 1996.

\bibitem{Kutz05angel3d}
 Martin Kutz, Conway's Angel in three dimensions, Theoretical Computer
 Science, 349(3):443-451, Elsevier, 2005. 

\end{thebibliography}
\end{document}